\documentclass[11pt,twoside,a4paper,reqno]{amsart}

\usepackage{amsmath}
\usepackage{amsthm}
\usepackage{amsfonts, amssymb}
\usepackage[all]{xy}
\usepackage{url}
\usepackage{algorithm}
\usepackage[end]{algpseudocode}
\usepackage{mathtools,amssymb}
\usepackage{tikz}
\usepackage{fontawesome}
\usepackage{float}

\setlength{\textwidth}{15cm}
\setlength{\topmargin}{0cm}
\setlength{\oddsidemargin}{.5cm}
\setlength{\evensidemargin}{.5cm}
\setlength{\textheight}{23cm}

\usepackage{latexsym}

\usepackage{graphicx}
\usepackage{graphics}
\usepackage[scriptsize,bf]{caption}
\usepackage{float}
\usepackage{enumerate}
\usepackage{verbatim}
\usepackage{color}
\usepackage{wrapfig}
\usepackage{subfig}

\theoremstyle{plain}
\newtheorem{lema}{\sc Lemma}
\newtheorem{prop}[lema]{\sc Proposition}

\newtheorem{conj}[lema]{\sc Conjecture}

\newtheorem{coro}[lema]{\sc Corollary}

\theoremstyle{definition}
\newtheorem*{defi}{\sc Definition}
\newtheorem{ej}[lema]{\sc Example}

\newtheorem{obs}[lema]{\sc Remark}

\makeatletter
\@namedef{subjclassname@2020}{%
  \textup{2020} Mathematics Subject Classification}
  
\newcommand{\multiline}[1]{%
  \begin{tabularx}{\dimexpr\linewidth-\ALG@thistlm}[t]{@{}X@{}}
    #1
  \end{tabularx}
}    
\makeatother

\begin{document}

\title[Playing through a noisy channel]{Playing through a noisy channel (and knowing it)}

\author[N. A. Capitelli]{Nicol\'as A. Capitelli}

\address[N. A. Capitelli]{Universidad Nacional de Luj\'an, Departamento de Ciencias B\'asicas, Argentina}

\email{\color{blue}ncapitelli@unlu.edu.ar}

\author[M. L. Privitelli]{Melina L. Privitelli}

\address[M. L. Privitelli]{Instituto de Ciencias, Universidad Nacional General Sarmiento}

\email{\color{blue}mprivite@ungs.edu.ar}

\subjclass[2020]{91A46,91A05,91A60}

\keywords{Combinatorial games, playing through a noisy channel}

\thanks{\textit{This work was partially supported by the University of Luj\'an (Resoluci\'on REC 224/19) and the Department of Basic Sciences, UNLu (CDD-CB 148/18).}}

\begin{abstract} In this note we discuss a theory of combinatorial games that involve trans\-mit\-ting the moves through a noisy channel that can introduce errors during the trans\-mi\-ssion. Players are aware of this interference and incorporate this variable into the game: the valid move is the received one, regardless of whether it is the other player's sent move (as long as it is a valid move in the original game; otherwise, a retransmission is requested). Players know the probability of introducing an error through communication and can play a non-optimal (but valid) move that maximizes their chances of winning. We present some examples and provide the basic definitions and results of this type of games.\end{abstract}

\maketitle

\section{Introduction} \label{Sec:Intro}

This article proposes a theory of combinatorial games played through noisy channels. Let $G$ be a combinatorial game and suppose that both players encode their moves (in the same way) and transmit them through a noisy channel that can introduce errors during the transmission. The players are aware of this situation and they both accept the received move as the valid move, regardless of whether it is the other player's transmitted move (as long as it is a valid move in the original game; otherwise, the receiver asks for retransmission). The probability of sending a move from a given position and receiving a (possibly different) one is known to the players, who can play a non-optimal move of $G$ that maximizes their chances of winning in this new game. 

This short note presents some examples of this new type of games and provides the basic definitions and results of this theory. Before we introduce a general framework, we outline some toy examples involving classical games. The main reference for combinatorial games is \cite{Conway} (see also \cite[\S 1]{Ferguson}). All games considered are impartial and under normal play rule.

%






\subsection*{1-pile \textit{Nim}}



Consider firstly a 1-pile \textit{Nim} of $3$ chips. The way we specify the error in a transmission of a move is as follows: the number of chips in the heap (in a given position) will we codified in binary numbers; thus, the initial state of 3 chips is encoded by \texttt{11}. A given move is transmitted specifying the number of chips left in the heap after that move. For example, from the position of 3 chips (\texttt{11}) one can transmit \texttt{10}, \texttt{01} or \texttt{00}, the only valid moves at this position. We let $p$ be the probability of making an error in a single digit (interchanging the \texttt{0} and \texttt{1}). Recall that if an invalid move of the original game (from the actual position) is received, the player asks for retransmission.

We make a preliminary analysis of this game to show the methodology that will be used. We study the probability $N_{\texttt{11}}$ of winning for Player I for every possible move from \texttt{11}. Suppose first we remove the entire pile (thus transmitting \texttt{00}). Since the valid moves from position \texttt{11} are $\{\texttt{00},\texttt{01},\texttt{10}\}$ then the probability that Player II receives \texttt{00} is $\frac{1-p}{1+p}$ and the probability that she receives \texttt{10} or \texttt{01} is $\frac{p}{1+p}$ in each case. Thus the probability $N_{\texttt{11}}(\texttt{00})$ that Player I wins by transmitting the move 
\texttt{00} is:
\begin{eqnarray}\label{Eq:1Heap}
N_{\texttt{11}}(\texttt{00})&=&P(\text{I wins }|\text{ II receives \texttt{00}})\cdot \left(\dfrac{1-p}{1+p}\right)+\nonumber\\
&&P(\text{I wins }|\text{ II receives \texttt{01}})\cdot \left(\dfrac{p}{1+p}\right)+  \\
&&P(\text{I wins }|\text{ II receives \texttt{10}})\cdot \left(\dfrac{p}{1+p}\right)\nonumber\end{eqnarray}
%
%
Now, $P(\text{I wins }|\text{ II receives \texttt{00}})=1$ and $P(\text{I wins }|\text{ II receives \texttt{01}})=0$ (since the only valid move of II is to play \texttt{00}). Let's look into $P(\text{I wins }|\text{ II receives \texttt{10}})$. The two valid plays for II in this case are $\{\text{\tt 00}, \text{\tt 01}\}$. If II plays \texttt{00} then 
\vspace{0.1in}
\begin{eqnarray*}
P(\text{I wins }|\text{ II receives \texttt{10}})&=&P((\text{I wins }|\text{ II receives \texttt{10}})\big|\text{ I receives \texttt{00}})\cdot P(\text{I receives \texttt{00}})\\
&+&P((\text{I wins }|\text{ II receives \texttt{10}})\big|\text{ I receives \texttt{01}})\cdot P(\text{I receives \texttt{01}})\\&=&0.(1-p)+1.p\\
&=& p.
\end{eqnarray*}

\vspace{0.1in}

\noindent Analogously, if II plays \texttt{01} then $P(\text{I wins }|\text{ II receives \texttt{10}})=1-p$. Hence, replacing in \eqref{Eq:1Heap} we obtain that the probability of winning for Player I if he transmits \texttt{00} (removes the entire pile) is
\begin{equation*}
N_{\texttt{11}}(\texttt{00})=\begin{cases}\dfrac{1-p+p^2}{1+p}&   p\leq 1/2\\
1-p&   p\geq 1/2.
\end{cases}
\end{equation*}

\vspace{0.1in}

In the same manner, we can compute the probability of winning for Player I if he plays \texttt{01} (removes two chips) and \texttt{10} (removes one chip):

\vspace{0.1in}

\begin{minipage}{0.49\textwidth}
\begin{equation*}
N_{\texttt{11}}(\texttt{01})=\begin{cases}p& p\leq 1/2\\
\dfrac{p-p^3}{1-p+p^2}& p\geq 1/2
\end{cases}
\end{equation*}\end{minipage}\begin{minipage}{0.49\textwidth}
\begin{equation*}
N_{\texttt{11}}(\texttt{10})=\begin{cases}\dfrac{2p-3p^2+p^3}{1-p+p^2}& p\leq 1/2\\
1-p& p\geq 1/2.
\end{cases}
\end{equation*}\end{minipage}

\vspace{0.1in}

\noindent We conclude that the probability $N_{\texttt{11}}$ of winning for Player I from position \texttt{11} (three chips) is 
%
%
\begin{equation}\label{Eq:Nim}
N_{\texttt{11}}=\left\{\begin{array}{cll}\dfrac{1-p+p^2}{1+p}& p\leq 1/2&\text{(}\texttt{00}\text{)}\\
\dfrac{p-p^3}{1-p+p^2}& p\geq 1/2&\text{(}\texttt{01}\text{)}.
\end{array}\right.
\end{equation}

\vspace{0.1in}

\noindent We have specified the optimal moves in the rightmost column of Equation \eqref{Eq:Nim}. Note that when $p=\frac{1}{2}$, the resulting game is a fair chance game (all moves being optimal).

This inductive argument can be applied to analyze the general case of a heap with $k\in\mathbb{N}$ chips (see Section \ref{Sect:ToyExampleRevisited} below).

\subsection*{Chomp!} Consider now \emph{Chomp!} and let $p\in [0,1]$ denote the probability of hitting accurately the targeted square and $p_{ij}\in [0,1]$ the chance of hitting square of row $i$ and column $j$, where $p+\sum_{i,j} p_{ij}=1$. Figure \ref{Figure:Chomp} shows some examples of possible distributions for $n\times m$ bars. In this example, the error introduced in the transmission of the move can be interpreted as a real-life situation when a player wants to chop the bar at a given square but the breaking happens unwillingly at a different square.



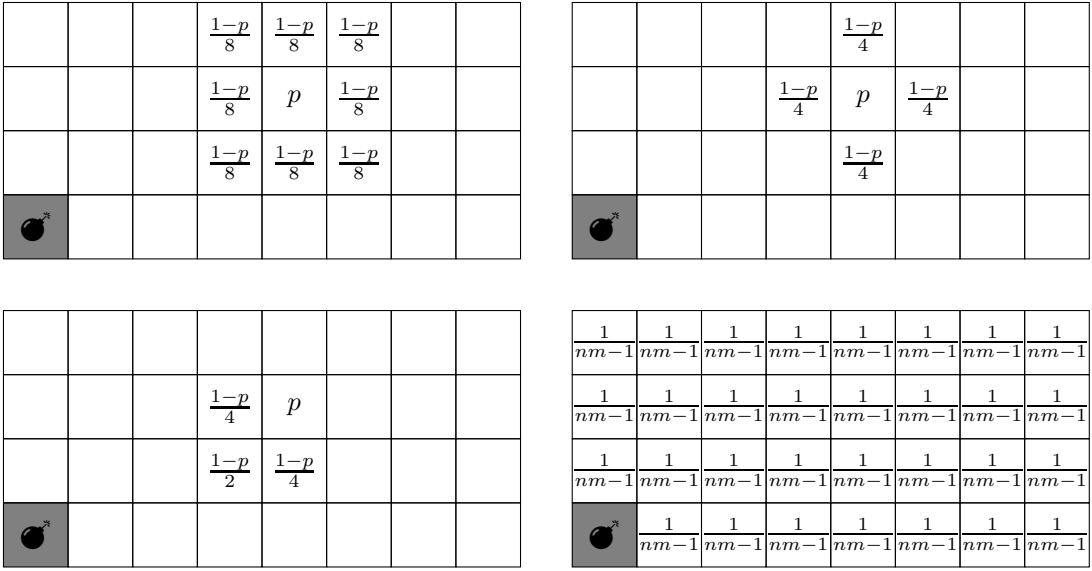
\begin{figure}
\begin{minipage}{0.49\textwidth}
\begin{center}
\begin{tikzpicture}[scale=0.85]

\fill[gray] (0,0) rectangle (1,1);
\draw (0.5,0.5) node {\faBomb};

\draw (4.5,2.5) node {\small $p$};
\draw (3.5,2.5) node {\small $\frac{1-p}{8}$};
\draw (3.5,1.5) node {\small $\frac{1-p}{8}$};
\draw (3.5,3.5) node {\small $\frac{1-p}{8}$};
\draw (5.5,2.5) node {\small $\frac{1-p}{8}$};
\draw (5.5,1.5) node {\small $\frac{1-p}{8}$};
\draw (5.5,3.5) node {\small $\frac{1-p}{8}$};
\draw (4.5,1.5) node {\small $\frac{1-p}{8}$};
\draw (4.5,3.5) node {\small $\frac{1-p}{8}$};

\draw (0,0) rectangle (1,1);
\draw (1,0) rectangle (2,1);
\draw (2,0) rectangle (3,1);
\draw (3,0) rectangle (4,1);
\draw (4,0) rectangle (5,1);
\draw (5,0) rectangle (6,1);
\draw (6,0) rectangle (7,1);
\draw (7,0) rectangle (8,1);

\draw (0,1) rectangle (1,2);
\draw (1,1) rectangle (2,2);
\draw (2,1) rectangle (3,2);
\draw (3,1) rectangle (4,2);
\draw (4,1) rectangle (5,2);
\draw (5,1) rectangle (6,2);
\draw (6,1) rectangle (7,2);
\draw (7,1) rectangle (8,2);

\draw (0,2) rectangle (1,3);
\draw (1,2) rectangle (2,3);
\draw (2,2) rectangle (3,3);
\draw (3,2) rectangle (4,3);
\draw (4,2) rectangle (5,3);
\draw (5,2) rectangle (6,3);
\draw (6,2) rectangle (7,3);
\draw (7,2) rectangle (8,3);

\draw (0,3) rectangle (1,4);
\draw (1,3) rectangle (2,4);
\draw (2,3) rectangle (3,4);
\draw (3,3) rectangle (4,4);
\draw (4,3) rectangle (5,4);
\draw (5,3) rectangle (6,4);
\draw (6,3) rectangle (7,4);
\draw (7,3) rectangle (8,4);

\end{tikzpicture}\end{center}\end{minipage}
\begin{minipage}{0.49\textwidth}
\begin{center}
\begin{tikzpicture}[scale=0.85]

\fill[gray] (0,0) rectangle (1,1);
\draw (0.5,0.5) node {\faBomb};

\draw (4.5,2.5) node {\small $p$};
\draw (3.5,2.5) node {\small $\frac{1-p}{4}$};
\draw (5.5,2.5) node {\small $\frac{1-p}{4}$};
\draw (4.5,1.5) node {\small $\frac{1-p}{4}$};
\draw (4.5,3.5) node {\small $\frac{1-p}{4}$};

\draw (0,0) rectangle (1,1);
\draw (1,0) rectangle (2,1);
\draw (2,0) rectangle (3,1);
\draw (3,0) rectangle (4,1);
\draw (4,0) rectangle (5,1);
\draw (5,0) rectangle (6,1);
\draw (6,0) rectangle (7,1);
\draw (7,0) rectangle (8,1);

\draw (0,1) rectangle (1,2);
\draw (1,1) rectangle (2,2);
\draw (2,1) rectangle (3,2);
\draw (3,1) rectangle (4,2);
\draw (4,1) rectangle (5,2);
\draw (5,1) rectangle (6,2);
\draw (6,1) rectangle (7,2);
\draw (7,1) rectangle (8,2);

\draw (0,2) rectangle (1,3);
\draw (1,2) rectangle (2,3);
\draw (2,2) rectangle (3,3);
\draw (3,2) rectangle (4,3);
\draw (4,2) rectangle (5,3);
\draw (5,2) rectangle (6,3);
\draw (6,2) rectangle (7,3);
\draw (7,2) rectangle (8,3);

\draw (0,3) rectangle (1,4);
\draw (1,3) rectangle (2,4);
\draw (2,3) rectangle (3,4);
\draw (3,3) rectangle (4,4);
\draw (4,3) rectangle (5,4);
\draw (5,3) rectangle (6,4);
\draw (6,3) rectangle (7,4);
\draw (7,3) rectangle (8,4);

\end{tikzpicture}
\end{center}
\end{minipage}

\vspace{0.25in}

\begin{minipage}{0.49\textwidth}
\begin{center}
\begin{tikzpicture}[scale=0.85]

\fill[gray] (0,0) rectangle (1,1);
\draw (0.5,0.5) node {\faBomb};

\draw (4.5,2.5) node {\small $p$};
\draw (3.5,2.5) node {\small $\frac{1-p}{4}$};
\draw (3.5,1.5) node {\small $\frac{1-p}{2}$};
\draw (4.5,1.5) node {\small $\frac{1-p}{4}$};

\draw (0,0) rectangle (1,1);
\draw (1,0) rectangle (2,1);
\draw (2,0) rectangle (3,1);
\draw (3,0) rectangle (4,1);
\draw (4,0) rectangle (5,1);
\draw (5,0) rectangle (6,1);
\draw (6,0) rectangle (7,1);
\draw (7,0) rectangle (8,1);

\draw (0,1) rectangle (1,2);
\draw (1,1) rectangle (2,2);
\draw (2,1) rectangle (3,2);
\draw (3,1) rectangle (4,2);
\draw (4,1) rectangle (5,2);
\draw (5,1) rectangle (6,2);
\draw (6,1) rectangle (7,2);
\draw (7,1) rectangle (8,2);

\draw (0,2) rectangle (1,3);
\draw (1,2) rectangle (2,3);
\draw (2,2) rectangle (3,3);
\draw (3,2) rectangle (4,3);
\draw (4,2) rectangle (5,3);
\draw (5,2) rectangle (6,3);
\draw (6,2) rectangle (7,3);
\draw (7,2) rectangle (8,3);

\draw (0,3) rectangle (1,4);
\draw (1,3) rectangle (2,4);
\draw (2,3) rectangle (3,4);
\draw (3,3) rectangle (4,4);
\draw (4,3) rectangle (5,4);
\draw (5,3) rectangle (6,4);
\draw (6,3) rectangle (7,4);
\draw (7,3) rectangle (8,4);

\end{tikzpicture}\end{center}\end{minipage}
\begin{minipage}{0.49\textwidth}
\begin{center}
\begin{tikzpicture}[scale=0.85]

\fill[gray] (0,0) rectangle (1,1);
\draw (0.5,0.5) node {\faBomb};

\draw (4.5,2.5) node {\small $\frac{1}{nm-1}$};
\draw (6.5,2.5) node {\small $\frac{1}{nm-1}$};
\draw (7.5,2.5) node {\small $\frac{1}{nm-1}$};
\draw (2.5,2.5) node {\small $\frac{1}{nm-1}$};
\draw (1.5,2.5) node {\small $\frac{1}{nm-1}$};
\draw (0.5,2.5) node {\small $\frac{1}{nm-1}$};
\draw (3.5,2.5) node {\small $\frac{1}{nm-1}$};
\draw (3.5,1.5) node {\small $\frac{1}{nm-1}$};
\draw (6.5,1.5) node {\small $\frac{1}{nm-1}$};
\draw (7.5,1.5) node {\small $\frac{1}{nm-1}$};
\draw (2.5,1.5) node {\small $\frac{1}{nm-1}$};
\draw (1.5,1.5) node {\small $\frac{1}{nm-1}$};
\draw (0.5,1.5) node {\small $\frac{1}{nm-1}$};
\draw (3.5,3.5) node {\small $\frac{1}{nm-1}$};
\draw (5.5,2.5) node {\small $\frac{1}{nm-1}$};
\draw (5.5,1.5) node {\small $\frac{1}{nm-1}$};
\draw (5.5,3.5) node {\small $\frac{1}{nm-1}$};
\draw (4.5,1.5) node {\small $\frac{1}{nm-1}$};
\draw (4.5,3.5) node {\small $\frac{1}{nm-1}$};
\draw (6.5,3.5) node {\small $\frac{1}{nm-1}$};
\draw (7.5,3.5) node {\small $\frac{1}{nm-1}$};
\draw (2.5,3.5) node {\small $\frac{1}{nm-1}$};
\draw (1.5,3.5) node {\small $\frac{1}{nm-1}$};
\draw (0.5,3.5) node {\small $\frac{1}{nm-1}$};
\draw (4.5,0.5) node {\small $\frac{1}{nm-1}$};
\draw (6.5,0.5) node {\small $\frac{1}{nm-1}$};
\draw (7.5,0.5) node {\small $\frac{1}{nm-1}$};
\draw (2.5,0.5) node {\small $\frac{1}{nm-1}$};
\draw (1.5,0.5) node {\small $\frac{1}{nm-1}$};
\draw (5.5,0.5) node {\small $\frac{1}{nm-1}$};
\draw (3.5,0.5) node {\small $\frac{1}{nm-1}$};

\draw (0,0) rectangle (1,1);
\draw (1,0) rectangle (2,1);
\draw (2,0) rectangle (3,1);
\draw (3,0) rectangle (4,1);
\draw (4,0) rectangle (5,1);
\draw (5,0) rectangle (6,1);
\draw (6,0) rectangle (7,1);
\draw (7,0) rectangle (8,1);

\draw (0,1) rectangle (1,2);
\draw (1,1) rectangle (2,2);
\draw (2,1) rectangle (3,2);
\draw (3,1) rectangle (4,2);
\draw (4,1) rectangle (5,2);
\draw (5,1) rectangle (6,2);
\draw (6,1) rectangle (7,2);
\draw (7,1) rectangle (8,2);

\draw (0,2) rectangle (1,3);
\draw (1,2) rectangle (2,3);
\draw (2,2) rectangle (3,3);
\draw (3,2) rectangle (4,3);
\draw (4,2) rectangle (5,3);
\draw (5,2) rectangle (6,3);
\draw (6,2) rectangle (7,3);
\draw (7,2) rectangle (8,3);

\draw (0,3) rectangle (1,4);
\draw (1,3) rectangle (2,4);
\draw (2,3) rectangle (3,4);
\draw (3,3) rectangle (4,4);
\draw (4,3) rectangle (5,4);
\draw (5,3) rectangle (6,4);
\draw (6,3) rectangle (7,4);
\draw (7,3) rectangle (8,4);

\end{tikzpicture}
\end{center}
\end{minipage}
\caption{Examples of possible error probability distribution in \emph{Chomp!} (squares where probability is $0$ are left in blank). \textbf{Top left:} equiprobable distribution among the squares sharing an edge or a vertex with the targeted square. \textbf{Top right:} equiprobable distribution among the squares sharing an edge with the targeted square.  \textbf{Bottom right:} equiprobable distribution among every possible move (from every position).}\label{Figure:Chomp}
\end{figure}

For example, consider the $2\times 2$ case for the error distribution pictured in the top left example of Figure \ref{Figure:Chomp}; that is, the probability of chopping the bar at the targeted square is $p$ and the probability of chopping it at a different square is only non-zero for a neighboring square (one that shares an edge or a vertex with the targeted square), and this probability is evenly distributed among all neighbors (see Figure \ref{Figure=Distribution} for an example of a distribution of a square not located in the center of the bar).
%
%
%
Clearly, the chance $N_{\tikz{\fill[gray] (0,0) rectangle (0.1,0.1);\draw (0,0) rectangle (0.1,0.1)}}$ for Player 1 to win from position $\tikz{\fill[gray] (0,0) rectangle (0.15,0.15);\draw (0,0) rectangle (0.15,0.15)}$ is $0$. Also, the chance for Player 1 to win from positions $\tikz{\fill[gray] (0,0) rectangle (0.15,0.15);\draw (0,0) rectangle (0.15,0.15);\draw (0.15,0) rectangle (0.3,0.15);}$ or $\tikz{\fill[gray] (0,0) rectangle (0.15,0.15);\draw (0,0) rectangle (0.15,0.15);\draw (0,0.15) rectangle (0.15,0.3);}$ is  $N_{\tikz{\fill[gray] (0,0) rectangle (0.1,0.1);\draw (0,0) rectangle (0.1,0.1);\draw (0.1,0) rectangle (0.2,0.1);}}=N_{\tikz{\fill[gray] (0,0) rectangle (0.1,0.1);\draw (0,0) rectangle (0.1,0.1);\draw (0,0.1) rectangle (0.1,0.2);}}=1$. Furthermore, since any valid move from $\tikz{\fill[gray] (0,0) rectangle (0.15,0.15);\draw (0,0) rectangle (0.15,0.15);\draw (0,0.15) rectangle (0.15,0.3);\draw (0.15,0) rectangle (0.3,0.15);}$ is a winning position, then $N_{\tikz{\fill[gray] (0,0) rectangle (0.1,0.1);\draw (0,0) rectangle (0.1,0.1);\draw (0,0.1) rectangle (0.1,0.2);\draw (0.1,0) rectangle (0.2,0.1);}}=0$. We compute the probability that Player I wins for the three possible moves from $\tikz{\fill[gray] (0,0) rectangle (0.15,0.15);\draw (0,0) rectangle (0.15,0.15);\draw (0,0.15) rectangle (0.15,0.3);\draw (0.15,0) rectangle (0.3,0.15);\draw (0.15,0.15) rectangle (0.3,0.3);}$ (the $\times$ marks the sent move). 
%

%
%
%
%
%
%
%
%
%
%
%
%
%
%
%
\begin{itemize}

\item $\text{Probability if Player I plays $\tikz[baseline]{\fill[gray] (0,0) rectangle (0.2,0.2);\draw (0,0) rectangle (0.2,0.2);\draw (0,0.2) rectangle (0.2,0.4);\draw (0.2,0) rectangle (0.4,0.2);\draw (0.2,0.2) rectangle (0.4,0.4);\draw (0.1,0.3) node {$\mathbf{\times}$};}$ = }(1-N_{\tikz{\fill[gray] (0,0) rectangle (0.1,0.1);\draw (0,0) rectangle (0.1,0.1);\draw (0.1,0) rectangle (0.2,0.1);}})p + (1-N_{\tikz{\fill[gray] (0,0) rectangle (0.1,0.1);\draw (0,0) rectangle (0.1,0.1);\draw (0,0.1) rectangle (0.1,0.2);}})\frac{1-p}{2}+ (1-N_{\tikz{\fill[gray] (0,0) rectangle (0.1,0.1);\draw (0,0) rectangle (0.1,0.1);\draw (0,0.1) rectangle (0.1,0.2);\draw (0.1,0) rectangle (0.2,0.1);}})\frac{1-p}{2}=\frac{1-p}{2}.$

\vspace{0.2in}


\item $\text{Probability if Player I plays $\tikz[baseline]{\fill[gray] (0,0) rectangle (0.2,0.2);\draw (0,0) rectangle (0.2,0.2);\draw (0,0.2) rectangle (0.2,0.4);\draw (0.2,0) rectangle (0.4,0.2);\draw (0.2,0.2) rectangle (0.4,0.4); \draw (0.3,0.1) node {$\mathbf{\times}$};}$ = }(1-N_{\tikz{\fill[gray] (0,0) rectangle (0.1,0.1);\draw (0,0) rectangle (0.1,0.1);\draw (0.1,0) rectangle (0.2,0.1);}})\frac{1-p}{2} + (1-N_{\tikz{\fill[gray] (0,0) rectangle (0.1,0.1);\draw (0,0) rectangle (0.1,0.1);\draw (0,0.1) rectangle (0.1,0.2);}})p + (1-N_{\tikz{\fill[gray] (0,0) rectangle (0.1,0.1);\draw (0,0) rectangle (0.1,0.1);\draw (0,0.1) rectangle (0.1,0.2);\draw (0.1,0) rectangle (0.2,0.1);}})\frac{1-p}{2}=\frac{1-p}{2}.$

\vspace{0.2in}



\item $\text{Probability if Player I plays $\tikz{\fill[gray] (0,0) rectangle (0.2,0.2);\draw (0,0) rectangle (0.2,0.2);\draw (0,0.2) rectangle (0.2,0.4);\draw (0.2,0) rectangle (0.4,0.2);\draw (0.2,0.2) rectangle (0.4,0.4);\draw (0.3,0.3) node {$\mathbf{\times}$};}$ = }(1-N_{\tikz{\fill[gray] (0,0) rectangle (0.1,0.1);\draw (0,0) rectangle (0.1,0.1);\draw (0.1,0) rectangle (0.2,0.1);}})\frac{1-p}{2} + (1-N_{\tikz{\fill[gray] (0,0) rectangle (0.1,0.1);\draw (0,0) rectangle (0.1,0.1);\draw (0,0.1) rectangle (0.1,0.2);}})\frac{1-p}{2}+ (1-N_{\tikz{\fill[gray] (0,0) rectangle (0.1,0.1);\draw (0,0) rectangle (0.1,0.1);\draw (0,0.1) rectangle (0.1,0.2);\draw (0.1,0) rectangle (0.2,0.1);}})p=p.$

\end{itemize}

\vspace{0.1in}

\noindent Therefore, the probability $N_{\tikz{\fill[gray] (0,0) rectangle (0.1,0.1);\draw (0,0) rectangle (0.1,0.1);\draw (0,0.1) rectangle (0.1,0.2);\draw (0.1,0) rectangle (0.2,0.1);\draw (0.1,0.1) rectangle (0.2,0.2);}}$ that Player I wins from position $\tikz{\fill[gray] (0,0) rectangle (0.15,0.15);\draw (0,0) rectangle (0.15,0.15);\draw (0,0.15) rectangle (0.15,0.3);\draw (0.15,0) rectangle (0.3,0.15);\draw (0.15,0.15) rectangle (0.3,0.3);}$ is

\begin{equation}\label{Eq:Chomp}N_{\tikz{\fill[gray] (0,0) rectangle (0.1,0.1);\draw (0,0) rectangle (0.1,0.1);\draw (0,0.1) rectangle (0.1,0.2);\draw (0.1,0) rectangle (0.2,0.1);\draw (0.1,0.1) rectangle (0.2,0.2);}}=\max\left\{p,\dfrac{1-p}{2}\right\}=\left\{\begin{array}{cll} \dfrac{1-p}{2}& 0\leq p\leq 1/3&\tikz[baseline]{\fill[gray] (0,0) rectangle (0.2,0.2);\draw (0,0) rectangle (0.2,0.2);\draw (0,0.2) rectangle (0.2,0.4);\draw (0.2,0) rectangle (0.4,0.2);\draw (0.2,0.2) rectangle (0.4,0.4); \draw (0.3,0.1) node {$\mathbf{\times}$};} \tikz{\fill[gray] (0,0) rectangle (0.2,0.2);\draw (0,0) rectangle (0.2,0.2);\draw (0,0.2) rectangle (0.2,0.4);\draw (0.2,0) rectangle (0.4,0.2);\draw (0.2,0.2) rectangle (0.4,0.4);\draw (0.1,0.3) node {$\mathbf{\times}$};}\\&&\\ p& 1/3\leq p\leq 1&\tikz{\fill[gray] (0,0) rectangle (0.2,0.2);\draw (0,0) rectangle (0.2,0.2);\draw (0,0.2) rectangle (0.2,0.4);\draw (0.2,0) rectangle (0.4,0.2);\draw (0.2,0.2) rectangle (0.4,0.4);\draw (0.3,0.3) node {$\mathbf{\times}$};}\end{array}\right.\end{equation}

\vspace{0.1in}

\noindent Note that for $p=\frac{1}{3}$ we have that $N_{\tikz{\fill[gray] (0,0) rectangle (0.1,0.1);\draw (0,0) rectangle (0.1,0.1);\draw (0,0.1) rectangle (0.1,0.2);\draw (0.1,0) rectangle (0.2,0.1);\draw (0.1,0.1) rectangle (0.2,0.2);}}=\frac{1}{3}$ for every move. That is, the game is a chance game (favoring Player II).

In the same manner one can analyze the cases $\tikz{\fill[gray] (0,0) rectangle (0.15,0.15);\draw (0,0) rectangle (0.15,0.15);\draw (0.15,0) rectangle (0.3,0.15);\draw (0.3,0) rectangle (0.45,0.15);}$, $\tikz{\fill[gray] (0,0) rectangle (0.15,0.15);\draw (0,0) rectangle (0.15,0.15);\draw (0,0.15) rectangle (0.15,0.3);\draw (0.15,0) rectangle (0.3,0.15);\draw (0.3,0) rectangle (0.45,0.15);}$ and $\tikz{\fill[gray] (0,0) rectangle (0.15,0.15);\draw (0,0) rectangle (0.15,0.15);\draw (0,0.15) rectangle (0.15,0.3);\draw (0.15,0) rectangle (0.3,0.15);\draw (0.15,0.15) rectangle (0.3,0.3);\draw (0.3,0) rectangle (0.45,0.15);}$ to solve the $2\times 3$ case. The probability distribution for the case $\tikz{\fill[gray] (0,0) rectangle (0.15,0.15);\draw (0,0) rectangle (0.15,0.15);\draw (0,0.15) rectangle (0.15,0.3);\draw (0.15,0) rectangle (0.3,0.15);\draw (0.15,0.15) rectangle (0.3,0.3);\draw (0.3,0) rectangle (0.45,0.15);}$ is given in Figure \ref{Figure=Distribution}.

\begin{figure}
\begin{center}
\begin{tikzpicture}


\fill[gray] (-4,0) rectangle (-3,1);
\draw (-3.5,0.5) node {\faBomb};

\draw (-2.5,1.5) node {$\frac{1-p}{2}$};
\draw (-1.5,0.5) node {$p$};
\draw (-2.5,0.5) node {$\frac{1-p}{2}$};
\draw (-3.5,1.5) node {$0$};

\draw (-4,0) rectangle (-3,1);
\draw (-3,0) rectangle (-2,1);
\draw (-4,1) rectangle (-3,2);
\draw (-3,1) rectangle (-2,2);
\draw (-2,0) rectangle (-1,1);

\fill[gray] (0,0) rectangle (1,1);
\draw (0.5,0.5) node {\faBomb};

\draw (1.5,1.5) node {$\frac{1-p}{3}$};
\draw (2.5,0.5) node {$\frac{1-p}{3}$};
\draw (0.5,1.5) node {$\frac{1-p}{3}$};
\draw (1.5,0.5) node {$p$};

\draw (0,0) rectangle (1,1);
\draw (1,0) rectangle (2,1);
\draw (0,1) rectangle (1,2);
\draw (1,1) rectangle (2,2);
\draw (2,0) rectangle (3,1);

\fill[gray] (4,0) rectangle (5,1);
\draw (4.5,0.5) node {\faBomb};

\draw (5.5,1.5) node {$p$};
\draw (4.5,1.5) node {$\frac{1-p}{3}$};
\draw (5.5,0.5) node {$\frac{1-p}{3}$};
\draw (6.5,0.5) node {$\frac{1-p}{3}$};

\draw (4,0) rectangle (5,1);
\draw (5,0) rectangle (6,1);
\draw (4,1) rectangle (5,2);
\draw (5,1) rectangle (6,2);
\draw (6,0) rectangle (7,1);

\fill[gray] (8,0) rectangle (9,1);
\draw (8.5,0.5) node {\faBomb};

\draw (9.5,1.5) node {$\frac{1-p}{2}$};
\draw (8.5,1.5) node {$p$};
\draw (9.5,0.5) node {$\frac{1-p}{2}$};
\draw (10.5,0.5) node {$0$};

\draw (8,0) rectangle (9,1);
\draw (9,0) rectangle (10,1);
\draw (8,1) rectangle (9,2);
\draw (9,1) rectangle (10,2);
\draw (10,0) rectangle (11,1);

\end{tikzpicture}
\end{center}\caption{Probability distribution for the shown case}\label{Figure=Distribution}
\end{figure}

\section{A general framework}\label{Sec:Framework}


Throughout this section we let $G=(V,E)$ denote a progessively bounded directed graph. Recall that the \emph{set of followers} of $v\in V$ is the set $F(v)=\{w\in V\,|\,(v,w)\in E\}$. For $e\in E$ we let $t(e)\in V$ denote the tail of $e$ and if $e,e'\in E$ are such that $t(e)=t(e')$ we shall write $e\sim e'$. If $v\in V$ we put $E_v=\{e\in E\,|\,t(e)=v\}$. A standard reference for graph theory is \cite{West}.

\begin{defi} Let $G=(V,E)$ be a combinatorial game. A \emph{move error distribution} $\psi$ over $G$ is a map $\psi: E\times E\rightarrow [0,1]$ satisfying:\begin{itemize}
\item $\psi(e,e')=0$ if $e\nsim e'$.
\item For every fixed $e\in E$, $\displaystyle\sum_{e'\sim e}\psi(e,e')=1$.
\end{itemize}
Since $\psi$ only takes non-zero values for pairs of arrows in $E_v$ we shall write $\psi_v(w,u)$ for $\psi((v,w),(v,u))$.\end{defi}

\noindent For $G$ seen as the graph of a combinatorial game, the distribution $\psi(e,e')$ represents the probability of sending the move $e$ and receiving $e'$ from the position $t(e)=t(e')$.



\begin{obs} Note that having a move error distribution over $G$ is equivalent to a collection $\{\psi_v\}_{v\in V}$ of maps $\psi_v: F(v)\times F(v)\rightarrow [0,1]$ satisfying $\sum_{u\in F(v)}\psi_v(w,u)=1$ for every $w\in F(v)$.
%
%
Also, if the edges in $G$ are ordered, say $E=\{e_1,\ldots,e_r \}$, then a move error distribution $\psi$ over $G$ can be expressed in terms of the stochastic matrix $D\in \mathbb{R}^{r\times r}$ defined by $D_{ij}=\psi(e_i,e_j)$. Actually, it is straightforward that $D\in \mathbb{R}^{r\times r}$ is the matrix associated to a move error distribution $\psi$ over $G$ if and only if $D$ is stochastic and $D_{ij}=0$ whenever $e_i\nsim e_j$.\end{obs}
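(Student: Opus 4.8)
## Proof Proposal

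The statement to be proved is the \textbf{Remark} characterizing which matrices $D \in \mathbb{R}^{r \times r}$ arise from a move error distribution: namely, $D$ is associated to some $\psi$ over $G$ if and only if $D$ is stochastic and $D_{ij} = 0$ whenever $e_i \nsim e_j$. (The equivalence with the collection $\{\psi_v\}$ in the first paragraph of the Remark is essentially immediate once one recalls that the relation $\sim$ partitions $E$ into the blocks $E_v$, so I will treat that part as a one-line observation and focus on the matrix characterization.)

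The plan is a direct double implication, unwinding definitions. First I would prove the forward direction: given a move error distribution $\psi$ and the associated matrix $D_{ij} = \psi(e_i, e_j)$, I must check (a) $D_{ij} \ge 0$, which is immediate since $\psi$ takes values in $[0,1]$; (b) each row of $D$ sums to $1$, which is exactly the second bullet of the definition, $\sum_{e_j \sim e_i} \psi(e_i, e_j) = 1$, combined with the first bullet ($\psi(e_i, e_j) = 0$ when $e_i \nsim e_j$) so that the row sum over \emph{all} $j$ equals the restricted sum; and (c) $D_{ij} = 0$ when $e_i \nsim e_j$, which is precisely the first bullet. Hence $D$ is stochastic with the stated zero pattern.

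Conversely, suppose $D$ is stochastic and $D_{ij} = 0$ whenever $e_i \nsim e_j$. I would define $\psi \colon E \times E \to [0,1]$ by $\psi(e_i, e_j) := D_{ij}$. Then $\psi$ is well-defined into $[0,1]$ because $D$ is a (nonnegative) stochastic matrix, so every entry lies in $[0,1]$. The first bullet of the definition holds by the assumed zero pattern of $D$. For the second bullet, fix $e_i \in E$; then $\sum_{e_j \sim e_i} \psi(e_i, e_j) = \sum_{e_j \sim e_i} D_{ij} = \sum_{j=1}^{r} D_{ij}$, where the last equality uses $D_{ij} = 0$ for $e_j \nsim e_i$, and this equals $1$ since $D$ is row-stochastic. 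Thus $\psi$ is a move error distribution whose associated matrix is $D$, and the two constructions are mutually inverse, giving the claimed bijection.

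There is essentially no obstacle here — the statement is a bookkeeping translation between the two bullet-point axioms and the definition of a (suitably sparse) stochastic matrix. The only point requiring a word of care is the interplay between "sum over $e_j \sim e_i$" and "sum over all $j$": one must invoke the zero pattern to pass between them, and in the forward direction one needs \emph{both} bullets (not just the normalization bullet) to conclude row-stochasticity. I would state this explicitly so the reader sees why the condition "$D_{ij} = 0$ whenever $e_i \nsim e_j$" is not redundant with stochasticity. Finally, I would remark that since $t(e_i) = t(e_j)$ exactly when $e_i, e_j$ lie in the same block $E_v$, the matrix $D$ is, up to simultaneous row/column permutation grouping the edges by their tails, block-diagonal with one row-stochastic block $D^{(v)} = (\psi_v(w,u))_{w,u \in F(v)}$ per vertex $v$ with $F(v) \neq \varnothing$; this makes the equivalence with the family $\{\psi_v\}_{v \in V}$ transparent and closes the proof.
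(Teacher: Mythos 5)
Your proof is correct and is exactly the routine definition-unwinding that the paper leaves implicit (the Remark is stated without proof as ``straightforward''). Your observation that the zero-pattern condition is needed to pass between the sum over $e_j\sim e_i$ and the full row sum, and the block-diagonal description of $D$ after grouping edges by their tails, are both accurate and fill in the intended argument.
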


\vspace{0.1in}

For a digraph $G=(V,E)$ and a move error distribution $\psi$ over $G$ we define a two person win-lose (perfect information) game $G_{\psi}$ by the following rules:
\begin{enumerate}
\item Player $I$ moves first from the starting position of $G$.
\item Players alternate moves.
\item At position $v\in V$, the player whose turn it is to move chooses a position $w\in F(v)$. The definite move $u\in F(v)$ (officially) played is then randomly chosen with probability given by $\psi_v(w,u)$.
\item The player who is confronted with a terminal position at his turn, loses.
\end{enumerate}

\begin{ej} For every $v\in V$ let $$\psi_v(w,u)=\begin{cases}1&w=u\\0&\text{otherwise}\end{cases}$$ Then $G_{\psi}=G$. That is, combinatorial games are a particular case of these type of games.
\end{ej}

\begin{ej}\label{Ej:Nim} The move error distribution for a 1-pile \emph{Nim} with $k$ chips with the error specified as in the case exhibited in Section \ref{Sec:Intro} can be seen to be $$\psi_k(s,r)=\dfrac{p^{d_H(s,r)}\cdot (1-p)^{n-d_H(s,r)}}{\displaystyle\sum_{i=0}^{k-1}p^{d_H(s,i)}\cdot (1-p)^{n-d_H(s,i)}},$$ where $p$ is the probability of introducing an error in a single digit, $d_H(s,r)$ is the Hamming distance between the binary representation of $s,r$ and $n$ is the number of digits in the binary representation of $k$.\end{ej}

\begin{ej} Let $G$ be the digraph of the 1-pile \emph{Nim} with more that one chip. Write $v_i$ for the vertex representing the position with $i$ chips on the heap ($0\leq i\leq n$) and let $\psi_{v_i}(w,u)=\frac{1}{i}$ (equiprobable error move distribution) for $i\neq 0$. Then $G_{\psi}$ is a fair chance game (see Lemma \ref{Lemma:MakingChance}).
\end{ej}

Consider a game $G_{\psi}$. For each $v\in V$ let $N_v: F(v)\rightarrow [0,1]$ be the function $N_v(w)$ that gives the probability of winning for the Next player from position $v\in G_{\psi}$ if it plays (sends) the move $v\rightarrow w$. Furthermore, let

$$N_v=\begin{cases} \displaystyle\max_{u\in F(v)}\{N_v(u)\}& \text{$v$ is not terminal}\\ 0&\text{$v$ is terminal}\end{cases}$$

\vspace{0.1in}

\noindent be the probability of winning for the Next player from position $v\in G_{\psi}$.
%
%
To solve the game $G_{\psi}$ is to find, for every position $v\in G_{\psi}$:\begin{itemize}
\item $N_v$ and 
\item the optimal move from $v$.
\end{itemize}

As it was exemplified in the toy games presented in Section \ref{Sec:Intro}, we have the following inductive argument to compute $N_v(w)$ for each $w\in F(v)$:

\begin{eqnarray}\label{Eq:mainformula}N_v(w)&=&\displaystyle\sum_{u\in F(v)}\ P(\text{I wins $|$ II receives $u$}) \cdot P(\text{II  receives $u$})\nonumber \\
&=&\displaystyle\sum_{u\in F(v)}\ (1-P(\text{II wins from $u$})) \cdot P(\text{II receives $u$})\nonumber  \\
&=&\displaystyle\sum_{u\in F(v)} (1-N_u)\cdot \psi_v(w,u)\end{eqnarray}

\begin{obs} We can characterize the positions $v\in G_{\psi}$ such that $N_v=0$ or $N_v=1$. Indeed, $$N_v=N_v(w)=\sum_{u\in F(v)}(1-N_u)\cdot \psi_v(w,u)$$ for some $w\in F(v)$. Thus, if $N_v=0$ then either $N_u=1$ or $\psi_v(w,u)=0$ in each term of the sum. In particular, if $\psi_v(w,u)>0$ for each $w,u\in F(v)$ then $N_v=0$ if and only if $N_u=1$ for every $u\in F(v)$. A similar argument can be applied to the case $N_v=1$ (making use of the fact that $\sum_{u\in F(v)}\psi_v(w,u)=1$): $N_v=1$ if and only if $N_u=0$ for every $u\in F(v)$.\end{obs}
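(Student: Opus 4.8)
The plan is to derive everything from the defining identity \eqref{Eq:mainformula} together with the observation that, at a non‑terminal $v$, the value $N_v$ equals $N_v(w)$ for at least one $w\in F(v)$ (namely any maximiser of $N_v(\cdot)$). Since $G$ is progressively bounded, all the numbers $N_u$ and $N_u(w)$ are well defined by the backward recursion from the terminal vertices, so the assertion is merely a pointwise relation between $N_v$ and the family $\{N_u\}_{u\in F(v)}$ and requires no induction.

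For the case $N_v=0$ I would fix a maximiser $w$, so that $0=N_v=N_v(w)=\sum_{u\in F(v)}(1-N_u)\,\psi_v(w,u)$. Each summand is a product of two numbers in $[0,1]$, hence non‑negative, and a finite sum of non‑negative reals vanishes exactly when every term does; thus $(1-N_u)\,\psi_v(w,u)=0$, i.e. $N_u=1$ or $\psi_v(w,u)=0$, for every $u\in F(v)$. Under the hypothesis $\psi_v(w,u)>0$ for all $w,u\in F(v)$ the second alternative is excluded, giving $N_u=1$ for all $u\in F(v)$. The reverse implication holds unconditionally: if $N_u=1$ for every $u\in F(v)$ then $N_v(w)=0$ for every $w$, hence $N_v=0$.

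The case $N_v=1$ is the mirror image, replacing non‑negativity by the stochasticity $\sum_{u\in F(v)}\psi_v(w,u)=1$. Since $N_v(w)=\sum_{u\in F(v)}(1-N_u)\psi_v(w,u)\le\sum_{u\in F(v)}\psi_v(w,u)=1$ always holds, $N_v=1$ forces some maximiser $w$ with $N_v(w)=1$; subtracting this from $\sum_{u\in F(v)}\psi_v(w,u)=1$ yields $\sum_{u\in F(v)}N_u\,\psi_v(w,u)=0$, again a sum of non‑negative terms, so $N_u\,\psi_v(w,u)=0$ and, under the positivity hypothesis, $N_u=0$ for every $u\in F(v)$; conversely $N_u=0$ for all $u$ gives $N_v(w)=\sum_{u\in F(v)}\psi_v(w,u)=1$, so $N_v=1$. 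I do not expect a genuine obstacle here: the whole argument is just the two elementary facts that a finite sum of non‑negative reals is $0$ iff all terms are $0$, applied once to $N_v(w)$ and once to $1-N_v(w)$. The only point deserving care is the scope of the quantifiers — without the full‑support assumption on $\psi_v$ one obtains only the weaker disjunctions ``$N_u=1$ or $\psi_v(w,u)=0$'' (resp. ``$N_u=0$ or $\psi_v(w,u)=0$''), and the clean biconditionals really do need $\psi_v$ to be positive on all of $F(v)\times F(v)$ (this already fails for ordinary combinatorial games, where $\psi_v$ is the identity), so that hypothesis must be stated explicitly.
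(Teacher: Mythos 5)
Your argument is correct and coincides with the paper's own (the remark essentially contains its proof): fix a maximiser $w$ so that $N_v=N_v(w)$, observe that a finite sum of non-negative terms vanishes iff each term does, and handle $N_v=1$ symmetrically via the stochasticity $\sum_{u\in F(v)}\psi_v(w,u)=1$. Your additional remarks on the unconditional converse and on the necessity of the positivity hypothesis are accurate refinements of the same reasoning, not a different route.
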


In terms of analyzing a game $G_{\psi}$, we may always assume that $\psi(e,e')>0$ whenever $e\sim e'$. Indeed, every move error distribution can be slightly altered to take non-zero values (for arrows sharing a tail) as the following result states. In what follows $|X|$ represents the cardinality of the set $X$.

\begin{prop} Given a game $G_{\psi}$ and a real number $\epsilon>0$ there exists a move error distribution $\psi'$ over $G$ such that:\begin{enumerate}
\item $\psi'(e,e')>0$ for every $e\sim e'$.
\item $|N_v-N'_v|<\epsilon$ for every $v\in V$
\item The optimal move from $v$ in $G_{\psi}$ and $G_{\psi'}$ coincide for every $v\in G$.
\end{enumerate}\end{prop}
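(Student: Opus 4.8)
The plan is to take $\psi'$ to be a small convex combination of $\psi$ with the uniform move error distribution. For each non‑terminal $v$ write $U_v(w,u)=1/|F(v)|$ for all $w,u\in F(v)$ (this makes sense because the games we consider are locally finite), and for a parameter $\delta\in(0,1)$ to be fixed at the very end set
$$\psi'_v(w,u)=(1-\delta)\,\psi_v(w,u)+\delta\,U_v(w,u).$$
By the Remark, $\{\psi'_v\}_{v\in V}$ defines a move error distribution $\psi'$ over $G$, and $\psi'_v(w,u)\ge \delta/|F(v)|>0$ whenever $w\sim u$, so (1) holds for every $\delta>0$. The one structural feature of this choice that I will use repeatedly is that the added mass $\delta/|F(v)|$ does not depend on $w$.

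Next I would prove (2) by induction on the rank $\ell(v)$ of $v$ (the length of the longest play starting at $v$, finite since $G$ is progressively bounded), in the sharper form $|N_v-N'_v|\le 2\delta\,\ell(v)$. Terminal positions are the same in $G$ and in $G_{\psi'}$ and have $N_v=N'_v=0$, giving the base case. For the step, since $N_v=\max_{w\in F(v)}N_v(w)$ and $N'_v=\max_{w\in F(v)}N'_v(w)$ we have $|N_v-N'_v|\le\max_{w\in F(v)}|N_v(w)-N'_v(w)|$, and for each fixed $w$ the master formula \eqref{Eq:mainformula} gives
$$N_v(w)-N'_v(w)=\delta\sum_{u\in F(v)}(1-N_u)\left(\psi_v(w,u)-\tfrac{1}{|F(v)|}\right)+\sum_{u\in F(v)}(N'_u-N_u)\,\psi'_v(w,u).$$
The first sum is $\delta$ times a difference of two numbers in $[0,1]$ (namely $N_v(w)$ and the average $\tfrac{1}{|F(v)|}\sum_u(1-N_u)$), hence is at most $\delta$ in absolute value; the second sum is at most $\max_{u\in F(v)}|N_u-N'_u|\le 2\delta(\ell(v)-1)$ by the inductive hypothesis and $\sum_u\psi'_v(w,u)=1$. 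Adding closes the induction. With $R:=\max_v\ell(v)$ (finite when $V$ is finite; for infinite locally finite $G$ one uses instead the position‑dependent weights $\delta_v:=\delta\,2^{-\ell(v)}$, for which the same recursion gives the uniform bound $|N_v-N'_v|\le 2\delta$ because the ranks strictly decrease along every play), choosing $\delta<\epsilon/(2R)$ yields (2).

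For (3), write $S_v:=\{w\in F(v):N_v(w)=N_v\}$ and $S'_v:=\{w\in F(v):N'_v(w)=N'_v\}$ for the sets of optimal moves from $v$ in $G_\psi$ and $G_{\psi'}$. Because the $w$‑independent summand $\tfrac{\delta}{|F(v)|}\sum_u(1-N'_u)$ drops out of any comparison and $1-\delta>0$, the set $S'_v$ is exactly the set of maximizers of $g(w):=\sum_{u\in F(v)}(1-N'_u)\psi_v(w,u)$, and by (2) this function satisfies $|g(w)-N_v(w)|\le 2\delta R$ uniformly in $w$. Put $\gamma_v:=\min\{N_v-N_v(w):w\in F(v)\setminus S_v\}>0$ (with $\gamma_v=+\infty$ if $S_v=F(v)$) and $\gamma:=\min_v\gamma_v>0$ when $V$ is finite. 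A two‑line estimate then shows that if $\delta<\gamma/(4R)$ then any $w_1\in F(v)\setminus S_v$ has $g(w_1)\le N_v-\gamma+2\delta R< N_v-2\delta R\le g(w_0)$ for $w_0\in S_v$, so no such $w_1$ maximizes $g$; hence $S'_v\subseteq S_v$ for every $v$. Taking $\delta$ below $\min\{\epsilon/(2R),\,\gamma/(4R)\}$ delivers (1), (2) and (3) together.

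The step I expect to be the real obstacle — and the one that fixes the honest reading of the statement — is (3): a generic small perturbation will break ties among equally optimal moves, so one should not hope for $S_v=S'_v$ literally, only the inclusion $S'_v\subseteq S_v$ proved above (equivalently, every optimal move of $G_{\psi'}$ is an optimal move of $G_\psi$), with genuine equality precisely when $G_\psi$ has a unique optimal move at $v$ — the typical situation, e.g.\ in all the examples of Section \ref{Sec:Intro} away from the exceptional values $p=1/2$ and $p=1/3$. A secondary, purely bookkeeping difficulty is that for infinite $V$ the rank is unbounded, so the scalar $\delta$ must be replaced by the weights $\delta_v=\delta\,2^{-\ell(v)}$ (or one argues inside the finite subgame reachable from each $v$); this does not affect the structure of the argument.
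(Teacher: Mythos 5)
Your proof is correct and, while it shares the paper's basic idea (perturb every transition probability toward the uniform one and induct down the game tree), it is executed quite differently -- and, frankly, more carefully. The paper simply sets $\psi'_v(w,u)=\psi_v(w,u)+\tfrac{\epsilon}{|F(v)|}$, an additive perturbation whose rows sum to $1+\epsilon$, so the paper's $\psi'$ is not literally a move error distribution; your convex combination $(1-\delta)\psi_v+\delta U_v$ repairs this at no cost. More substantively, the paper's induction computes $N'_v(w)-N_v(w)=\sum_{u}(1-N_u)\tfrac{\epsilon}{|F(v)|}$, i.e.\ it expands $N'_v(w)$ using the \emph{old} follower values $N_u$ rather than $N'_u$, so the propagation of error through the recursion is never actually controlled; your decomposition of $N_v(w)-N'_v(w)$ into a perturbation term (bounded by $\delta$) plus the inherited term $\sum_u(N'_u-N_u)\psi'_v(w,u)$, closed by induction on the rank $\ell(v)$, is exactly the missing bookkeeping, and the resulting bound $2\delta\,\ell(v)$ makes the choice of $\delta$ explicit. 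Finally, for item (3) the paper asserts that $N'_v(w)\ge N'_v(u)$ if and only if $N_v(w)\ge N_v(u)$, which again would only follow if the follower values were unchanged; your gap argument yielding $S'_v\subseteq S_v$ (with equality exactly when the optimal move is unique, and with the honest observation that a generic perturbation breaks ties among equally optimal moves) is the correct form of the claim and identifies a real imprecision in the statement that the paper's own proof glosses over. In short: same germ of an idea, but your route supplies the normalization, the quantitative induction, and the tie-breaking caveat that the published argument lacks.
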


\begin{proof} Define $\psi_v'(w,u)=\psi_v(w,u)+\frac{\epsilon}{|F(v)|}$. Since $N_u=N_u'$ for every terminal vertex $u\in G$, we may assume that $|N_w-N'_w|<\epsilon$ for every $w\in F(v)$.  Then, by \eqref{Eq:mainformula}, 

$$N_v'(w)-N_v(w)=\displaystyle\sum_{u\in F(v)} (1-N_u)\cdot \dfrac{\epsilon}{|F(v)|}\leq\displaystyle\sum_{u\in F(v)}\dfrac{\epsilon}{|F(v)|}= \epsilon.$$ 

\vspace{0.1in}

\noindent By the definition of $\psi'$,  $N_v'(w)\geq N_v'(u)$ if and only if $N_v(w)\geq N_v(u)$ whenever $w,u\in F(v)$, which proves \textit{(2)} and \textit{(3)}.\end{proof}

In analogy with the classical theory, we shall call a position $v\in G_{\psi}$ such that $N_v=0$ a $P$-position and a position $w\in G_{\psi}$ such that $N_w=1$ an $N$-position. Furthermore, a position $u\in G_{\psi}$ such that $0<N_u<1$ will be called an $O$-position.


Some combinatorial games can be converted into chance games with the use of a convenient move error distribution. As it was pointed out in Section \ref{Sec:Intro}, the 1-pile \emph{Nim} of three chips is a (fair) chance game for $p=\frac{1}{2}$ and \emph{Chomp!} is an (unfair) chance game for $p=\frac{1}{3}$. The following result gives a sufficient condition for an equiprobable move error distribution to determine a fair chance game. 

\begin{lema}\label{Lemma:MakingChance} Suppose $\psi_v(w,u)=\frac{1}{n}$ for every $w,u\in F(v)$ and every $v\in V$ (equiprobable function) where $n$ is the outdegree of $v$. Suppose additionally that $|\{u\in F(v)\,:\,N_u=0\}|=|\{u\in F(v)\,:\,N_u=1\}|$ whenever $N_v\neq 0, 1$. Then: $N_v=\frac{1}{2}$ whenever $N_v\neq 0, 1$. In particular, if the starting position $v$ is such that $N_v\neq 0,1$ then $G_{\psi}$ is a fair chance game.\end{lema}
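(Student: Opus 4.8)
The plan is to argue by induction on the rank of $v$, i.e.\ the length of the longest directed path from $v$ to a terminal vertex, which is a well-defined natural number because $G$ is progressively bounded. The statement to establish by induction is the implication ``if $N_v\neq 0,1$ then $N_v=\tfrac12$''; for terminal vertices, and more generally for any vertex with $N_v\in\{0,1\}$, there is nothing to verify. Before starting the induction I would record what the equiprobable hypothesis does to formula \eqref{Eq:mainformula}: since $\psi_v(w,u)=\tfrac1n$ for all $w,u\in F(v)$, we get
$$N_v(w)=\frac1n\sum_{u\in F(v)}(1-N_u),$$
which does \emph{not} depend on the move $w$. Hence $N_v$ coincides with this common value $N_v(w)$, and in fact every move from $v$ is optimal (intuitively, with an equiprobable error distribution the move actually played is uniformly random on $F(v)$ no matter what is sent).

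For the inductive step, fix a non-terminal $v$ with $N_v\neq 0,1$. Each follower $u\in F(v)$ has strictly smaller rank, so the inductive hypothesis applies to it: either $N_u\in\{0,1\}$, or $N_u=\tfrac12$. Partition $F(v)=A\sqcup B\sqcup C$ with $A=\{u:N_u=0\}$, $B=\{u:N_u=1\}$, $C=\{u:N_u=\tfrac12\}$, and write $a=|A|$, $b=|B|$, $c=|C|$, so $a+b+c=n$. Then
$$N_v=\frac1n\Bigl(a\cdot 1+b\cdot 0+c\cdot\tfrac12\Bigr)=\frac{a+c/2}{n}.$$
Now I invoke the second hypothesis of the lemma: because $N_v\neq 0,1$, we have $a=|\{u\in F(v):N_u=0\}|=|\{u\in F(v):N_u=1\}|=b$. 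Substituting $n=2a+c$ gives $N_v=\dfrac{a+c/2}{2a+c}=\tfrac12$, which closes the induction.

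Finally, applying the proved implication to the starting position: if $N_v\neq 0,1$ then $N_v=\tfrac12$, and since every move from every $O$-position is optimal (by the first paragraph), $G_\psi$ is a fair chance game. I do not anticipate a genuine obstacle; the only two points that need care are organizing the induction on rank so that the hypothesis is available at every follower, and noting at the outset that the equiprobable distribution makes $N_v(w)$ independent of $w$ — this is precisely what lets us speak of ``$N_v$'' unambiguously while carrying out the computation above.
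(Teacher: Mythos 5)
Your proof is correct and follows essentially the same route as the paper's: an induction (on the progressively bounded structure) that partitions $F(v)$ into followers with $N_u=0$, $N_u=1$, and $N_u=\tfrac12$, applies the cardinality hypothesis, and evaluates the equiprobable form of Equation \eqref{Eq:mainformula}. Your write-up is merely more explicit about the well-foundedness of the induction and about $N_v(w)$ being independent of $w$, both of which the paper leaves implicit.
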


\begin{proof} Supose $v$ is such that $N_v\neq 0, 1$. Using Equation \eqref{Eq:mainformula} and the fact that $|\{u\in F(v)\,:\,N_u=0\}|=\frac{1}{2}|\{u\in F(v)\,:\,N_u=0\}|+\frac{1}{2}|\{u\in F(v)\,:\,N_u=1\}|$ we have by an inductive argument that for any $w\in F(v)$:
 
$$\begin{array}{rcl}
N_v(w)&=&\dfrac{1}{n}\displaystyle\sum_{u\in F(v)}(1-N_u)\\
&&\\
&=&\dfrac{1}{n}\left(\displaystyle\sum_{N_u=0}1+\sum_{N_u=\frac{1}{2}}(1-N_u)\right)\\
&&\\
&=&\dfrac{1}{n}\left(\displaystyle\sum_{N_u=0}\dfrac{1}{2}+\displaystyle\sum_{N_u=1}\dfrac{1}{2}+\sum_{N_u=\frac{1}{2}}\dfrac{1}{2}\right)\\
&&\\
&=&\dfrac{1}{2n}\displaystyle\sum_{u\in F(v)}1=\dfrac{1}{2}.
\end{array}$$
 %
 %
%
%
%
%
%
%
\end{proof}

Note that (general $n$-heaps) \emph{Nim} fulfills the hypotheses of Lemma \ref{Lemma:MakingChance} (provided the number of chips in at least one heap is greater than $1$). Indeed, the $N$-positions are those with only an odd number of piles of 1 chip and the $P$-positions are those with only an even number of piles of 1 chip. 
%
%
An $O$-position $(n_1,\ldots,n_k)$ has a follower which is an $N$-position or a $P$-position only if it is of the form $$(\underbrace{\ast,\ldots,\ast}_{\text{0's and 1's}},n,\underbrace{\ast\ldots,\ast}_{\text{0's and 1's})})$$ for some $n\geq 2$, which has exactly one follower which is an $N$-position and one which is a $P$-position. Hence, we have established the following result.

\begin{coro}\label{Coro:NimBalanced} The (n-piles) $\textit{Nim}_{\psi}$, for an equiprobable $\psi$, is a fair chance game for any non-$P$ non-$N$ starting position.\end{coro}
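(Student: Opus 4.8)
The plan is to derive Corollary \ref{Coro:NimBalanced} directly from Lemma \ref{Lemma:MakingChance} by verifying its two hypotheses for the digraph of $n$-pile \emph{Nim} equipped with the equiprobable move error distribution. The first hypothesis — that $\psi_v$ is the equiprobable function on $F(v)\times F(v)$ with $n$ the outdegree of $v$ — is immediate since that is exactly the distribution under consideration, so the work is entirely in checking the combinatorial balance condition: whenever a position $v=(n_1,\ldots,n_k)$ has $N_v\neq 0,1$, the number of followers $u$ with $N_u=0$ equals the number with $N_u=1$.

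First I would recall (as the excerpt does) the classification of the $P$- and $N$-positions of $\textit{Nim}_\psi$: a position is an $N$-position iff it consists only of piles of size $0$ or $1$ with an odd number of $1$'s, and a $P$-position iff it consists only of piles of size $0$ or $1$ with an even number of $1$'s. This should be justified by the Remark preceding Lemma \ref{Lemma:MakingChance}: a position all of whose followers are $P$-positions is an $N$-position and conversely, and among all-$\{0,1\}$ positions the Nim-move structure reproduces exactly the normal-play parity argument (from an all-ones-or-zeros position every move flips the parity and stays in the all-$\{0,1\}$ class), while any position containing a pile of size $\geq 2$ has some follower outside the $\{0,1\}$ class and hence is neither forced to be $P$ nor $N$ — one must check such a position is genuinely an $O$-position, which follows inductively because it has followers of all three types or at least has $0<N_v<1$ by \eqref{Eq:mainformula}.

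Next, for an $O$-position $v=(n_1,\ldots,n_k)$ I would argue that $v$ has a $P$- or $N$-follower \emph{only if} exactly one coordinate, say the $j$-th, satisfies $n_j\geq 2$ and all the others lie in $\{0,1\}$; for in that case reducing pile $j$ to $0$ or to $1$ lands in the all-$\{0,1\}$ class, and choosing whichever of $\{0,1\}$ makes the total number of ones even gives a $P$-follower while the other choice gives an $N$-follower, and no other move from $v$ leaves the pile-$j$ value in $\{0,1\}$ with everything else already in $\{0,1\}$, so there is exactly one follower of each type. Conversely, if two or more coordinates exceed $1$, then no single Nim move can bring all coordinates into $\{0,1\}$, so $v$ has no $P$- or $N$-follower at all and the counts are both zero. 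In every case the two counts coincide, so the balance hypothesis of Lemma \ref{Lemma:MakingChance} holds, and the lemma yields $N_v=\tfrac12$ for every $O$-position; in particular any starting position that is neither a $P$- nor an $N$-position has winning probability $\tfrac12$, i.e. $\textit{Nim}_\psi$ is a fair chance game.

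The main obstacle I anticipate is not the counting of $P$- and $N$-followers, which is a short case analysis, but making fully rigorous the claim that a \emph{non-$P$, non-$N$} Nim position is actually an $O$-position (so that Lemma \ref{Lemma:MakingChance} applies to it) — one needs the inductive fact that $0<N_v<1$, which by the Remark after \eqref{Eq:mainformula} requires that not all followers are $N$-positions and not all are $P$-positions; this is where one uses that a position with a pile of size $\geq 2$ has, for instance, a follower obtained by decrementing that pile by one which is again non-$P$ non-$N$ (hence an $O$-position by induction), together with the $\{0,1\}$-followers analyzed above, to exhibit followers of more than one type. Once that inductive scaffolding is in place, the corollary is just the specialization of Lemma \ref{Lemma:MakingChance}.
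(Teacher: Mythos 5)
Your proposal is correct and follows essentially the same route as the paper: classify the $P$- and $N$-positions of $\textit{Nim}_\psi$ as the all-$\{0,1\}$ positions with an even (resp.\ odd) number of ones, observe that an $O$-position has $P$- or $N$-followers only when exactly one pile has size at least $2$, in which case it has exactly one of each, and then invoke Lemma \ref{Lemma:MakingChance}. The only difference is that you make explicit the inductive verification that a non-$P$ non-$N$ position is genuinely an $O$-position, a point the paper asserts without comment; this is a reasonable addition rather than a deviation.
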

On the other hand, \textit{Chomp!} does not fulfill the properties of Lemma \ref{Lemma:MakingChance} (consider the position $\tikz{\fill[gray] (0,0) rectangle (0.15,0.15);\draw (0,0) rectangle (0.15,0.15);\draw (0,0.15) rectangle (0.15,0.3);\draw (0.15,0) rectangle (0.3,0.15);\draw (0.15,0.15) rectangle (0.3,0.3)}$).

\section{1-Pile Nim Revisited}\label{Sect:ToyExampleRevisited}

In this section we briefly retake the case of 1-pile \textit{Nim} (of $k$ chips) presented in Section \ref{Sec:Intro}. This example turns out to be a (non-trivial) interesting game for general $k$. 
%
Recall that the number of chips in a given position is encoded by it binary representation and a given move is transmitted specifying the number of chips left in the heap under that move. Let $N_k(p)$ denote the probability of winning from a heap of size $k$ and probability $p$ of making an error in a single digit. Equation \eqref{Eq:mainformula} and Example \ref{Ej:Nim} gives us for this case:

\begin{equation}\label{Eq:formularNim}N_k(p)=\max_{0\leq s\leq k-1}\displaystyle\sum_{i=0}^{k-1} (1-N_i)\cdot \dfrac{p^{d_H(s,i)}\cdot (1-p)^{n-d_H(s,i)}}{\sum_{j=0}^{k-1}p^{d_H(s,j)}\cdot (1-p)^{n-d_H(s,j)}}\end{equation}

\vspace{0.1in}

\noindent where $d_H(s,r)$ is the Hamming distance between the binary representation of $s$ and $r$, and $n$ is the number of digits in the binary representation of $k$. Clearly, $N_0(p)=0$ and $N_1(p)=1$ for every $p$. Also, it is straightforward to see that 

$$N_2(p)=\begin{cases}1-p&p\leq\frac{1}{2}\\p&p\geq\frac{1}{2}\end{cases}.$$

\vspace{0.1in}

\noindent We already found $N_3(p)$ in Section \ref{Sec:Intro} (see Equation \eqref{Eq:Nim}).  Using Equation \eqref{Eq:formularNim} we computed $N_k(p)$ (and the corresponding optimal moves) for $p=\frac{i}{100}$ ($0\leq i\leq 100$) for each $k=4,\ldots, 10$. This data is presented in the Appendix. 
 
%
%
%
%
%
%
%
\normalsize

%
%
%
%
%

Many information and conjectures can be inferred from these cases. Firstly, we note that cases $5\leq k\leq 10$ already show the non-triviality of this game for $p\geq \frac{1}{2}$. For example, for 5 chips the optimal move is $3$ for $p<0.76$ and $4$ for $p>0.77$. A similar situation arises for the cases of 6 and 7 chips, but with the presence of the move $0$ for $0.76<p<0.78$. Cases of 9 and 10 chips show this kind behavior for the moves $7$, $8$ and $4$ for $p<0.74$, $0.75<p<0.81$ and $p>0.82$ respectively. Also, cases of 5, 6 and 7 chips give Player I a probability of winning under $\frac{1}{2}$ for  $0.69<p<0.80$. For the cases of $9$ and 10 chips, this happens for $0.73<p<0.82$. We also note that the game of 10 chips is a rather fair game for every $p\geq 0.30$, in the sense that $|P(\text{I wins})-P(\text{II wins})|<0.01$.

The following conjectures are based on these consistent evidences.

\begin{conj}\label{Conj:1} For $p\leq \frac{1}{2}$ and $k\geq 1$, $N_k(p)\geq \frac{1}{2}$ and the optimal play is (transmitting the) removal the entire pile.\end{conj}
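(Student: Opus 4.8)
The plan is to prove both parts simultaneously by strong induction on $k$, using the explicit formula \eqref{Eq:formularNim}. The base cases $k=0,1,2,3$ are already recorded in the excerpt ($N_0=0$, $N_1=1$, and for $k=2,3$ the displayed formulas give $N_k(p)\geq\frac12$ on $p\leq\frac12$ with the optimal move being $\texttt{0}$, i.e. removing the whole pile). So fix $k\geq 4$ and $p\leq\frac12$, and assume the statement holds for all smaller heap sizes. The inductive hypothesis gives us $N_i(p)\geq\frac12$ for every $1\leq i\leq k-1$, while $N_0(p)=0$; equivalently $1-N_i\leq\frac12$ for $1\leq i\leq k-1$ and $1-N_0=1$.

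The first step is to evaluate the candidate move $s=0$ (transmit \texttt{00\ldots0}, remove the entire pile) in \eqref{Eq:formularNim}. Writing $w_i = p^{d_H(0,i)}(1-p)^{n-d_H(0,i)}$ for the unnormalized weights and $W=\sum_{j=0}^{k-1}w_j$, the contribution of the term $i=0$ is $(1-N_0)\,w_0/W = w_0/W$, where $w_0=(1-p)^n$ is the \emph{largest} of all the weights $w_i$ precisely because $p\leq\frac12$ (each factor $p<1-p$, so more $1$'s in the binary string only decreases the weight). Thus
\begin{equation*}
N_k(p)\geq N_k(p)(0)=\frac{w_0}{W}+\sum_{i=1}^{k-1}(1-N_i)\frac{w_i}{W}\geq \frac{w_0}{W}+\frac12\cdot\frac{W-w_0}{W}=\frac12+\frac12\cdot\frac{w_0-\,?}{W}.
\end{equation*}
More carefully: the right-hand side equals $\frac{1}{2}\cdot\frac{W+w_0}{W}>\frac12$ once we use $1-N_i\le\frac12$ for $i\ge 1$ and $1-N_0=1$. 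Hence $N_k(p)\ge N_k(p)(0)>\frac12$, which already establishes the inequality $N_k(p)\geq\frac12$.

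The second step — showing $s=0$ is actually \emph{optimal} — is the genuine obstacle, and I expect it to require comparing $N_k(p)(0)$ against $N_k(p)(s)$ for an arbitrary move $s$ with $1\le s\le k-1$. For a general $s$ the normalization $W_s=\sum_j p^{d_H(s,j)}(1-p)^{n-d_H(s,j)}$ changes, and crucially the term $i=0$ (the one carrying the large coefficient $1-N_0=1$) now gets weight $p^{d_H(s,0)}(1-p)^{n-d_H(s,0)}$, which is strictly smaller than $w_0$ whenever $s\neq 0$. The heuristic is that move $s=0$ maximizes the ``mass'' placed on the unique follower ($i=0$) whose coefficient $1-N_i$ is large, while all other followers contribute at most $\frac12$; so spreading weight toward any $i\ge1$ can only hurt. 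Making this rigorous means bounding $N_k(p)(s)=\sum_i(1-N_i)\tfrac{w_i^{(s)}}{W_s}\le \tfrac{w_0^{(s)}}{W_s}+\tfrac12\bigl(1-\tfrac{w_0^{(s)}}{W_s}\bigr)=\tfrac12+\tfrac12\tfrac{w_0^{(s)}}{W_s}$ and comparing with $N_k(p)(0)\ge\tfrac12+\tfrac12\tfrac{w_0}{W_0}$; it then suffices to show $w_0/W_0\ge w_0^{(s)}/W_s$ for every $s$, i.e. that the normalized weight of the all-zeros vertex is maximized when we ourselves transmit the all-zeros move. This last inequality, $\tfrac{(1-p)^n}{\sum_j p^{d_H(0,j)}(1-p)^{n-d_H(0,j)}}\ge \tfrac{p^{d_H(s,0)}(1-p)^{n-d_H(s,0)}}{\sum_j p^{d_H(s,j)}(1-p)^{n-d_H(s,j)}}$, is a purely combinatorial statement about Hamming weights on the integer interval $\{0,1,\dots,k-1\}$ and is where the real work lies; the subtlety is that the index set is a truncated cube $\{0,\dots,k-1\}$ rather than a full Boolean cube, so the denominators are not simply $(1-p+p)^n=1$ and one must argue about how truncation interacts with shifting the ``center'' from $0$ to $s$. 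I would handle it by noting $d_H(s,j)\ge d_H(0,j)-d_H(0,s)$ is the wrong direction, and instead use that the map $j\mapsto j\oplus s$ (bitwise XOR) is a bijection of the full cube sending the weight profile around $0$ to that around $s$, then control the error introduced by restricting to $\{0,\dots,k-1\}$ — the restriction removes \emph{more} low-weight (high-coefficient) vertices from the $s\neq0$ configuration than from the $s=0$ one. If a clean inequality proves elusive, a fallback is to prove optimality of $s=0$ only in the regime where it can be checked that every non-zero move's first follower has Hamming weight $\ge1$, combined with the coarse bound above, which already forces $N_k(p)(0)-N_k(p)(s)>0$ for $p$ bounded away from $\tfrac12$ and then extend by the continuity/monotonicity visible in the tabulated data.
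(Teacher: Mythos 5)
The statement you are trying to prove is stated in the paper as Conjecture \ref{Conj:1}: the authors offer no proof, only the numerical evidence in the Appendix and the remark that it is ``trivially true for a neighborhood of $0$.'' So there is no argument in the paper to compare yours against; the question is whether your attempt actually settles the conjecture, and it does not.

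The fatal problem is in your first step, where the key inequality points the wrong way. Your inductive hypothesis gives $N_i(p)\geq\tfrac12$ for $1\leq i\leq k-1$, hence $1-N_i\leq\tfrac12$; since the weights $w_i/W$ are nonnegative, this yields
\begin{equation*}
N_k(p)(0)=\frac{w_0}{W}+\sum_{i=1}^{k-1}(1-N_i)\frac{w_i}{W}\ \leq\ \frac{w_0}{W}+\frac12\cdot\frac{W-w_0}{W}=\frac12+\frac12\cdot\frac{w_0}{W},
\end{equation*}
an \emph{upper} bound, not the lower bound you assert. To bound $N_k(p)(0)$ from below you would need lower bounds on $1-N_i$, i.e.\ upper bounds on $N_i$, which the induction as set up does not provide (indeed $N_1=1$ makes the $i=1$ term vanish entirely). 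A concrete check: for $k=3$, $p=\tfrac12$ one has $w_0=w_1=w_2=\tfrac14$, $W=\tfrac34$, and $N_3(\tfrac12)(0)=\tfrac13\cdot 1+\tfrac13\cdot 0+\tfrac13\cdot\tfrac12=\tfrac12$, whereas your claimed lower bound evaluates to $\tfrac13+\tfrac12\cdot\tfrac23=\tfrac23>\tfrac12$. So the first part of the induction does not close, and this is precisely the difficulty that makes the statement a conjecture rather than a theorem: one needs two-sided control of the $N_i$ together with a quantitative estimate of how much mass $\psi_k(0,\cdot)$ concentrates on the follower $i=0$. Your second step (optimality of $s=0$) is, by your own account, a programme rather than a proof --- the reduction to $w_0/W_0\geq w_0^{(s)}/W_s$ is a reasonable heuristic, but it rests on the same upper bound $\tfrac12+\tfrac12\,w_0^{(s)}/W_s$ for $N_k(p)(s)$ being compared against a lower bound for $N_k(p)(0)$ that you have not established. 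As it stands, neither half of the conjecture is proved.
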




\begin{conj}\label{Conj:2} If $k=2^n$ then $N_k(p)\geq \frac{1}{2}$ for all $k$. The optimal move for $p\geq \frac{1}{2}$ is (transmitting) $k-1$.\end{conj}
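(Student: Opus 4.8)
The plan is to induct on $n$, exploiting that $k=2^{n}$ is a very special position: every follower $0,1,\dots,2^{n}-1$ of $2^{n}$ is a legal move, so transmitting from $2^{n}$ never triggers a retransmission. Hence, writing $w(r)$ for the number of ones in the binary expansion of $r$, the move error distribution out of $2^{n}$ needs no renormalization, and by \eqref{Eq:mainformula}
\[
N_{2^{n}}(s)=\sum_{r=0}^{2^{n}-1}(1-N_{r})\,p^{\,d_H(s,r)}(1-p)^{\,n-d_H(s,r)}
=\mathbb{E}\bigl[\,1-N_{s\oplus Z}\,\bigr],
\]
where $Z$ is a string of $n$ independent $\mathrm{Bernoulli}(p)$ bits. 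The bound $N_{2^{n}}(p)\ge\tfrac12$ for $p\le\tfrac12$ is subsumed by Conjecture~\ref{Conj:1}, so the real content is the regime $p\ge\tfrac12$, in which I would transmit $s=2^{n}-1$ (all ones), making each received bit equal to $1$ with probability $1-p\le\tfrac12$; set $A_{n}:=N_{2^{n}}(2^{n}-1)$.

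First I would condition on the leading significant bit of the received position. If it does not flip (probability $1-p$), the remaining $n-1$ bits are distributed exactly as the received string when the all-ones move is transmitted from $2^{n-1}$; if it flips (probability $p$), we land in the upper half $\{2^{n-1},\dots,2^{n}-1\}$ with that same law on the low bits. This yields the recursion
\[
A_{n}=p\,A_{n-1}+(1-p)\,C_{n},\qquad
C_{n}:=\sum_{j=0}^{2^{n-1}-1}\bigl(1-N_{2^{n-1}+j}\bigr)\,p^{\,(n-1)-w(j)}(1-p)^{\,w(j)} .
\]
The inductive hypothesis furnishes $A_{n-1}=N_{2^{n-1}}\ge\tfrac12$ together with the fact that the all-ones move is optimal from $2^{n-1}$. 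Since $p\ge\tfrac12$ gives $p\ge 1-p$, a one-line rearrangement shows $A_{n}\ge\tfrac12$ as soon as $A_{n-1}+C_{n}\ge 1$. Because $A_{n-1}=\sum_{j}(1-N_{j})\,p^{(n-1)-w(j)}(1-p)^{w(j)}$ with weights summing to $1$, this in turn is \emph{equivalent} to the single inequality
\[
\sum_{j=0}^{2^{n-1}-1}\Bigl(1-N_{j}-N_{2^{n-1}+j}\Bigr)\,p^{\,(n-1)-w(j)}(1-p)^{\,w(j)}\ \ge\ 0 .
\]

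The heart of the matter — and the step I expect to be the main obstacle — is exactly this last inequality, which is \emph{not} termwise nonnegative: the summand at $j=2^{n-1}-1$ equals $1-N_{2^{n-1}-1}-N_{2^{n}-1}$ and is typically negative. One must exploit that this offending term carries the small weight $(1-p)^{\,n-1}$ while the dominant weight $p^{\,n-1}$ sits at $j=0$, where $N_{0}=0$ makes the term equal to $1$. My first attempt would be to pair each $j$ with its bit-complement $2^{n-1}-1-j$, compare the weights $p^{(n-1)-w}(1-p)^{w}$ and $p^{w}(1-p)^{(n-1)-w}$ for $p\ge\tfrac12$, and thereby reduce the claim to a controlled relation between $N_{j}+N_{2^{n-1}+j}$ and $N_{2^{n-1}-1-j}+N_{2^{n}-1-j}$; proving such a relation will very likely force one to strengthen the induction so as to carry structural bounds on \emph{all} positions below $2^{n}$, and will interact with Conjecture~\ref{Conj:1}.

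Finally, the optimality assertion in part (2) should be reached by running the same leading-bit decomposition for an arbitrary transmitted move $s$ (splitting both on the leading bit of $s$ and on whether it flips) and showing that $N_{2^{n}}(2^{n}-1)$ dominates $N_{2^{n}}(s)$ for every $s$; this comparison will again rest on auxiliary inequalities of the above type rather than on a naive monotonicity in the number of transmitted ones. The endpoints $p=\tfrac12$ (a fair chance game, where every move is optimal) and $p=1$ (where transmitting $2^{n}-1$ delivers the opponent the terminal, hence $P$-, position) serve as useful consistency checks, and the base case $n=1$ is immediate from $N_{2}(p)=p$ for $p\ge\tfrac12$.
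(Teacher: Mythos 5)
This statement is stated in the paper as Conjecture~\ref{Conj:2}; the authors give no proof of it, only the numerical evidence in the Appendix (for $k=4$ and $k=8$) and the heuristic remark that \texttt{11...1} is a valid move precisely when the pile size is a power of $2$. So there is no argument of the paper to compare yours against, and your proposal does not supply one either. The reductions you do carry out are sound: splitting the received string on its leading bit does give $A_n=pA_{n-1}+(1-p)C_n$, and the rearrangement $A_n-\tfrac12\ge(2p-1)(A_{n-1}-\tfrac12)$ under the hypothesis $A_{n-1}+C_n\ge 1$ is correct for $p\ge\tfrac12$. But you have pushed the entire content of the conjecture into the single inequality $\sum_{j}\bigl(1-N_j-N_{2^{n-1}+j}\bigr)p^{(n-1)-w(j)}(1-p)^{w(j)}\ge 0$, which you explicitly leave unproved, which is not termwise nonnegative (the $j=2^{n-1}-1$ term is negative, as the tables confirm), and whose proposed treatment by bit-complement pairing would require structural bounds on $N_j$ for \emph{all} $j<2^n$ that are themselves conjectural and entangled with the (also open) Conjecture~\ref{Conj:1}. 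The optimality of the move $k-1$ is likewise deferred to unstated ``auxiliary inequalities.'' This is a reasonable plan of attack, but it has a genuine gap at exactly the point where the difficulty lies; it is not a proof.

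One further point you must pin down before pursuing this line: your claim that transmissions from $2^n$ never require retransmission, hence no renormalization, depends on the encoding convention. Example~\ref{Ej:Nim} takes $n$ to be the number of digits of $k$, which for $k=2^n$ is $n+1$; under that convention a transmitted move can be received with leading bit $1$, i.e.\ as an invalid move, and the denominator in \eqref{Eq:formularNim} does not collapse to $1$, so your product form $p^{d_H}(1-p)^{n-d_H}$ and the leading-bit recursion both fail. Your argument needs moves from $2^n$ to be encoded in $n$ bits (the convention suggested by the authors' remark following Conjecture~\ref{Conj:2} and consistent with the Appendix tables at $p=1$), and this choice should be made explicit, since the two conventions define different games.
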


Conjecture \ref{Conj:1} is trivially true for a neighborhood of $0$; however, these computations show that it may also be valid for values near $\frac{1}{2}$. On the other hand, Conjecture \ref{Conj:2} seems related to Conjecture \ref{Conj:1} based on the fact that the move \texttt{11...1} is a valid move only when the number of chips is a power of 2.

\section{Future Work}

In this paper we have presented some elementary examples of combinatorial games through noisy channels in order to evidence the mechanics of these type of games, but this theory offers a broad range of interesting examples. Even for a given (fixed combinatorial game), the possibilities of different error distribution are huge.

Next is a short list of possible future directions and open problems of this theory.

\begin{enumerate}
\item As it was pointed out in Section \ref{Sect:ToyExampleRevisited}, the general case of 1-pile \emph{Nim} with the given error distribution is non-trivial for $p\geq \frac{1}{2}$. Aside from the conjectures stated in the aforementiones section, what other formal results can be obtained for this game? Additionally, the case for $k=10$ chips was a rather fair game for $p\geq 0.30$. Does this happen consistently for large values of $k$ (with the exception of the case $k=2^n$)? If so, does the bound on $p$ approaches 0?
\item How can the particular error distribution presented for 1-pile \emph{Nim} be extended for arbitrary number of piles? It would also be interesting to study 1-pile substraction games with different substraction sets.
\item Analize the general case of \emph{Chomp!} for the error distribution presented studied in Section \ref{Sec:Intro}. Additionally, defining an error distribution which emulates a real-life situation would be interesting.
\item In Lemma \ref{Lemma:MakingChance} we gave sufficient conditions on a combinatorial game for a move error distribution to determine a (fair) chance game. Does every combinatorial game admit a move error function that makes it a (fair or unfair) chance game?
\item Is there a reciprocal result for the previous item? is it possible to model chance games by a $G_{\psi}$? If not, which chance games can be modeled by a $G_{\psi}$?
\end{enumerate}







\newpage

\section*{Appendix}

The following tables contain the results for $N_k(p)$ for $p=\frac{i}{100}$ ($0\leq i <100$) for each $4\leq k\leq 10$ and the optimal move (opt) in every case.
\tiny

\begin{center}
\begin{tabular}{|c|c|c|c|c|c|c|c|c|c|c|}
\multicolumn{11}{c}{1-pile \textit{Nim} of 4 chips} \\[2ex]
\cline{1-3} \cline{5-7} \cline{9-11}
$p$ & $N_4(p)$ & opt. && $p$ & $N_4(p)$ & opt. && $p$ & $N_4(p)$ & opt.\\ \cline{1-3} \cline{5-7} \cline{9-11}
0.00 & 1.000000000000000 & 0 & & 0.34 & 0.560586029850746 & 0 & & 0.68 & 0.586586699386503 & 3 \\
0.01 & 0.980200970297030 & 0 & & 0.35 & 0.554527777777778 & 0 & & 0.69 & 0.594317058643938 & 3 \\
0.02 & 0.960807529411765 & 0 & & 0.36 & 0.548805647058824 & 0 & & 0.70 & 0.602329113924051 & 3 \\
0.03 & 0.941824640776699 & 0 & & 0.37 & 0.543412978102190 & 0 & & 0.71 & 0.610622642236494 & 3 \\
0.04 & 0.923256615384615 & 0 & & 0.38 & 0.538342956521739 & 0 & & 0.72 & 0.619198204408818 & 3 \\
0.05 & 0.905107142857143 & 0 & & 0.39 & 0.533588618705036 & 0 & & 0.73 & 0.628057153568315 & 3 \\
0.06 & 0.887379320754717 & 0 & & 0.40 & 0.529142857142857 & 0 & & 0.74 & 0.637201640416047 & 3 \\
0.07 & 0.870075682242991 & 0 & & 0.41 & 0.524998425531915 & 0 & & 0.75 & 0.646634615384615 & 3 \\
0.08 & 0.853198222222222 & 0 & & 0.42 & 0.521147943661972 & 0 & & 0.76 & 0.656359827788650 & 3 \\
0.09 & 0.836748422018349 & 0 & & 0.43 & 0.517583902097902 & 0 & & 0.77 & 0.666381822092599 & 3 \\
0.10 & 0.820727272727273 & 0 & & 0.44 & 0.514298666666667 & 0 & & 0.78 & 0.676705931434090 & 3 \\
0.11 & 0.805135297297297 & 0 & & 0.45 & 0.511284482758621 & 0 & & 0.79 & 0.687338268552931 & 3 \\
0.12 & 0.789972571428571 & 0 & & 0.46 & 0.508533479452055 & 0 & & 0.80 & 0.698285714285714 & 3 \\
0.13 & 0.775238743362832 & 0 & & 0.47 & 0.506037673469388 & 0 & & 0.81 & 0.709555903793878 & 3 \\
0.14 & 0.760933052631579 & 0 & & 0.48 & 0.503788972972973 & 0 & & 0.82 & 0.721157210699202 & 3 \\
0.15 & 0.747054347826087 & 0 & & 0.49 & 0.501779181208054 & 0 & & 0.83 & 0.733098729304925 & 3 \\
0.16 & 0.733601103448276 & 0 & & 0.50 & 0.500000000000000 & 0 - 3 & & 0.84 & 0.745390255083179 & 3 \\
0.17 & 0.720571435897436 & 0 & & 0.51 & 0.501865111585122 & 3 & & 0.85 & 0.758042263610315 & 3 \\
0.18 & 0.707963118644068 & 0 & & 0.52 & 0.504120904051173 & 3 & & 0.86 & 0.771065888130968 & 3 \\
0.19 & 0.695773596638655 & 0 & & 0.53 & 0.506758115061926 & 3 & & 0.87 & 0.784472895929643 & 3 \\
0.20 & 0.684000000000000 & 0 & & 0.54 & 0.509767595529537 & 3 & & 0.88 & 0.798275663685152 & 3 \\
0.21 & 0.672639157024793 & 0 & & 0.55 & 0.513140365448505 & 3 & & 0.89 & 0.812487151978716 & 3 \\
0.22 & 0.661687606557377 & 0 & & 0.56 & 0.516867668789809 & 3 & & 0.90 & 0.827120879120879 & 3 \\
0.23 & 0.651141609756098 & 0 & & 0.57 & 0.520941027155915 & 3 & & 0.91 & 0.842190894455942 & 3 \\
0.24 & 0.640997161290323 & 0 & & 0.58 & 0.525352291909043 & 3 & & 0.92 & 0.857711751295337 & 3 \\
0.25 & 0.631250000000000 & 0 & & 0.59 & 0.530093694499406 & 3 & & 0.93 & 0.873698479623489 & 3 \\
0.26 & 0.621895619047619 & 0 & & 0.60 & 0.535157894736842 & 3 & & 0.94 & 0.890166558711318 & 3 \\
0.27 & 0.612929275590551 & 0 & & 0.61 & 0.540538026768141 & 3 & & 0.95 & 0.907131889763780 & 3 \\
0.28 & 0.604346000000000 & 0 & & 0.62 & 0.546227742543171 & 3 & & 0.96 & 0.924610768718802 & 3 \\
0.29 & 0.596140604651163 & 0 & & 0.63 & 0.552221252575303 & 3 & & 0.97 & 0.942619859305799 & 3 \\
0.30 & 0.588307692307692 & 0 & & 0.64 & 0.558513363825364 & 3 & & 0.98 & 0.961176166462668 & 3 \\
0.31 & 0.580841664122137 & 0 & & 0.65 & 0.565099514563107 & 3 & & 0.99 & 0.980297010200990 & 3 \\
0.32 & 0.573736727272727 & 0 & & 0.66 & 0.571975806085611 & 3 & & 1.00 & 1.000000000000000 & 3 \\
0.33 & 0.566986902255639 & 0 & & 0.67 & 0.579139031197843 & 3 & &  &  &   \\\cline{1-3} \cline{5-7} \cline{9-11}
\end{tabular}

\vspace{0.2in}

\begin{tabular}{|c|c|c|c|c|c|c|c|c|c|c|}
\multicolumn{11}{c}{1-pile \textit{Nim} of 5 chips} \\[2ex]
\cline{1-3} \cline{5-7} \cline{9-11}
$p$ & $N_5(p)$ & opt. && $p$ & $N_5(p)$ & opt. && $p$ & $N_5(p)$ & opt.\\ \cline{1-3} \cline{5-7} \cline{9-11}
0.00 & 1.000000000000000 & 0 & & 0.34 & 0.538378409631035 & 0 & & 0.68 & 0.500759915549947 & 3 \\
0.01 & 0.970786197337449 & 0 & & 0.35 & 0.534315852002716 & 0 & & 0.69 & 0.497265951711815 & 3 \\
0.02 & 0.943091115962430 & 0 & & 0.36 & 0.530527329304674 & 0 & & 0.70 & 0.493156839970864 & 3 \\
0.03 & 0.916837570430568 & 0 & & 0.37 & 0.526999767177495 & 0 & & 0.71 & 0.488405052149257 & 3 \\
0.04 & 0.891952582197463 & 0 & & 0.38 & 0.523720747786677 & 0 & & 0.72 & 0.482985031466283 & 3 \\
0.05 & 0.868367115581316 & 0 & & 0.39 & 0.520678476706606 & 0 & & 0.73 & 0.476873163748067 & 3 \\
0.06 & 0.846015834025134 & 0 & & 0.40 & 0.517861751152074 & 0 & & 0.74 & 0.470047726738993 & 3 \\
0.07 & 0.824836874780745 & 0 & & 0.41 & 0.515259929459493 & 0 & & 0.75 & 0.462488819320215 & 3 \\
0.08 & 0.804771640337804 & 0 & & 0.42 & 0.512862901725567 & 0 & & 0.76 & 0.454178272731180 & 3 \\
0.09 & 0.785764605097556 & 0 & & 0.43 & 0.510661061516108 & 0 & & 0.77 & 0.458703309461914 & 4 \\
0.10 & 0.767763135946622 & 0 & & 0.44 & 0.508645278562259 & 0 & & 0.78 & 0.468243744571022 & 4 \\
0.11 & 0.750717325523173 & 0 & & 0.45 & 0.506806872365420 & 0 & & 0.79 & 0.478894674819673 & 4 \\
0.12 & 0.734579837089105 & 0 & & 0.46 & 0.505137586635825 & 0 & & 0.80 & 0.490681753889675 & 4 \\
0.13 & 0.719305760029194 & 0 & & 0.47 & 0.503629564492966 & 0 & & 0.81 & 0.503629901582421 & 4 \\
0.14 & 0.704852475093482 & 0 & & 0.48 & 0.502275324358930 & 0 & & 0.82 & 0.517763357526783 & 4 \\
0.15 & 0.691179528583823 & 0 & & 0.49 & 0.501067736478247 & 0 & & 0.83 & 0.533105736675273 & 4 \\
0.16 & 0.678248514760955 & 0 & & 0.50 & 0.500000000000000 & 0 - 4 & & 0.84 & 0.549680086540498 & 4 \\
0.17 & 0.666022965815711 & 0 & & 0.51 & 0.501070416268233 & 3 & & 0.85 & 0.567508946127860 & 4 \\
0.18 & 0.654468248808124 & 0 & & 0.52 & 0.502253599482662 & 3 & & 0.86 & 0.586614406523029 & 4 \\
0.19 & 0.643551469031949 & 0 & & 0.53 & 0.503506651859789 & 3 & & 0.87 & 0.607018173094491 & 4 \\
0.20 & 0.633241379310345 & 0 & & 0.54 & 0.504785783917968 & 3 & & 0.88 & 0.628741629272713 & 4 \\
0.21 & 0.623508294771747 & 0 & & 0.55 & 0.506046540093826 & 3 & & 0.89 & 0.651805901868526 & 4 \\
0.22 & 0.614324012693864 & 0 & & 0.56 & 0.507244031580279 & 3 & & 0.90 & 0.676231927894525 & 4 \\
0.23 & 0.605661737038733 & 0 & & 0.57 & 0.508333173548720 & 3 & & 0.91 & 0.702040522854742 & 4 \\
0.24 & 0.597496007333362 & 0 & & 0.58 & 0.509268923745336 & 3 & & 0.92 & 0.729252450469917 & 4 \\
0.25 & 0.589802631578947 & 0 & & 0.59 & 0.510006519335504 & 3 & & 0.93 & 0.757888493808341 & 4 \\
0.26 & 0.582558622897398 & 0 & & 0.60 & 0.510501708817498 & 3 & & 0.94 & 0.787969527795738 & 4 \\
0.27 & 0.575742139647192 & 0 & & 0.61 & 0.510710975841583 & 3 & & 0.95 & 0.819516593081930 & 4 \\
0.28 & 0.569332428761651 & 0 & & 0.62 & 0.510591751855934 & 3 & & 0.96 & 0.852550971247286 & 4 \\
0.29 & 0.563309772081838 & 0 & & 0.63 & 0.510102614656897 & 3 & & 0.97 & 0.887094261338058 & 4 \\
0.30 & 0.557655435473617 & 0 & & 0.64 & 0.509203470145971 & 3 & & 0.98 & 0.923168457726796 & 4 \\
0.31 & 0.552351620534156 & 0 & & 0.65 & 0.507855714885242 & 3 & & 0.99 & 0.960796029301990 & 4 \\
0.32 & 0.547381418707442 & 0 & & 0.66 & 0.506022377390005 & 3 & &  &  &  \\
0.33 & 0.542728767641403 & 0 & & 0.67 & 0.503668236493154 & 3 & &  &  & \\ \cline{1-3} \cline{5-7} \cline{9-11}
\end{tabular}
\end{center}

\begin{center}
\begin{tabular}{|c|c|c|c|c|c|c|c|c|c|c|}
\multicolumn{11}{c}{1-pile \textit{Nim} of 6 chips} \\[2ex]
\cline{1-3} \cline{5-7} \cline{9-11}
$p$ & $N_6(p)$ & opt. && $p$ & $N_6(p)$ & opt. && $p$ & $N_6(p)$ & opt.\\\cline{1-3} \cline{5-7} \cline{9-11}
0.00 & 1.000000000000000 & 0 & & 0.34 & 0.531756701939471 & 0 & & 0.68 & 0.500472484097763 & 3 \\
0.01 & 0.970692972347877 & 0 & & 0.35 & 0.528088160342963 & 0 & & 0.69 & 0.498292591887115 & 3 \\
0.02 & 0.942743593518538 & 0 & & 0.36 & 0.524709179484254 & 0 & & 0.70 & 0.495703534969049 & 3 \\
0.03 & 0.916109116423990 & 0 & & 0.37 & 0.521603755313847 & 0 & & 0.71 & 0.492674171482722 & 3 \\
0.04 & 0.890746574252241 & 0 & & 0.38 & 0.518756579699146 & 0 & & 0.72 & 0.489171796778423 & 3 \\
0.05 & 0.866612986459500 & 0 & & 0.39 & 0.516153014394268 & 0 & & 0.73 & 0.485162013120185 & 3 \\
0.06 & 0.843665537794019 & 0 & & 0.40 & 0.513779065174457 & 0 & & 0.74 & 0.480608610487901 & 3 \\
0.07 & 0.821861733310791 & 0 & & 0.41 & 0.511621356208230 & 0 & & 0.75 & 0.475473458786294 & 3 \\
0.08 & 0.801159532007875 & 0 & & 0.42 & 0.509667104733469 & 0 & & 0.76 & 0.471666277251390 & 0 \\
0.09 & 0.781517461425464 & 0 & & 0.43 & 0.507904096097467 & 0 & & 0.77 & 0.474970849783124 & 0 \\
0.10 & 0.762894715293047 & 0 & & 0.44 & 0.506320659215518 & 0 & & 0.78 & 0.477090034522646 & 0 \\
0.11 & 0.745251236083839 & 0 & & 0.45 & 0.504905642497838 & 0 & & 0.79 & 0.480657745012594 & 4 \\
0.12 & 0.728547784135385 & 0 & & 0.46 & 0.503648390290427 & 0 & & 0.80 & 0.491391715498081 & 4 \\
0.13 & 0.712745994817701 & 0 & & 0.47 & 0.502538719871882 & 0 & & 0.81 & 0.503379004638392 & 4 \\
0.14 & 0.697808425072723 & 0 & & 0.48 & 0.501566899045015 & 0 & & 0.82 & 0.516656044333983 & 4 \\
0.15 & 0.683698590508804 & 0 & & 0.49 & 0.500723624359553 & 0 & & 0.83 & 0.531256610337795 & 4 \\
0.16 & 0.670380994109437 & 0 & & 0.50 & 0.500000000000000 & 0 - 5 & & 0.84 & 0.547211693331240 & 4 \\
0.17 & 0.657821147504473 & 0 & & 0.51 & 0.500706668813307 & 3 & & 0.85 & 0.564549384879275 & 4 \\
0.18 & 0.645985585653237 & 0 & & 0.52 & 0.501474017435610 & 3 & & 0.86 & 0.583294778427640 & 4 \\
0.19 & 0.634841875700767 & 0 & & 0.53 & 0.502273575022701 & 3 & & 0.87 & 0.603469885352901 & 4 \\
0.20 & 0.624358620689655 & 0 & & 0.54 & 0.503077572338349 & 3 & & 0.88 & 0.625093565931604 & 4 \\
0.21 & 0.614505458739625 & 0 & & 0.55 & 0.503858938046591 & 3 & & 0.89 & 0.648181474959709 & 4 \\
0.22 & 0.605253058244056 & 0 & & 0.56 & 0.504591270291603 & 3 & & 0.90 & 0.672746021628479 & 4 \\
0.23 & 0.596573109576377 & 0 & & 0.57 & 0.505248784419111 & 3 & & 0.91 & 0.698796343148910 & 4 \\
0.24 & 0.588438313748844 & 0 & & 0.58 & 0.505806238134201 & 3 & & 0.92 & 0.726338291514030 & 4 \\
0.25 & 0.580822368421053 & 0 & & 0.59 & 0.506238835794856 & 3 & & 0.93 & 0.755374432697453 & 4 \\
0.26 & 0.573699951615074 & 0 & & 0.60 & 0.506522113897183 & 3 & & 0.94 & 0.785904057507351 & 4 \\
0.27 & 0.567046703457774 & 0 & & 0.61 & 0.506631810107021 & 3 & & 0.95 & 0.817923203247663 & 4 \\
0.28 & 0.560839206238349 & 0 & & 0.62 & 0.506543718425531 & 3 & & 0.96 & 0.851424685282569 & 4 \\
0.29 & 0.555054963039850 & 0 & & 0.63 & 0.506233533237653 & 3 & & 0.97 & 0.886398137555695 & 4 \\
0.30 & 0.549672375177270 & 0 & & 0.64 & 0.505676685078808 & 3 & & 0.98 & 0.922830061081694 & 4 \\
0.31 & 0.544670718651206 & 0 & & 0.65 & 0.504848170966394 & 3 & & 0.99 & 0.960703879404200 & 4 \\
0.32 & 0.540030119804954 & 0 & & 0.66 & 0.503722382080647 & 3 & &  &  &  \\
0.33 & 0.535731530353961 & 0 & & 0.67 & 0.502272931449153 & 3 & &  &  &    \\\cline{1-3} \cline{5-7} \cline{9-11}
\end{tabular}

\vspace{0.2in}

\begin{tabular}{|c|c|c|c|c|c|c|c|c|c|c|}
\multicolumn{11}{c}{1-pile \textit{Nim} of 7 chips} \\[2ex]
\cline{1-3} \cline{5-7} \cline{9-11}
$p$ & $N_7(p)$ & opt. && $p$ & $N_7(p)$ & opt. && $p$ & $N_7(p)$ & opt.\\ \cline{1-3} \cline{5-7} \cline{9-11}
0.00 & 1.000000000000000 & 0 & & 0.34 & 0.526712631117538 & 0 & & 0.68 & 0.500322194046992 & 3 \\
0.01 & 0.970599775046155 & 0 & & 0.35 & 0.523414748265526 & 0 & & 0.69 & 0.498832380898230 & 3 \\
0.02 & 0.942396479764310 & 0 & & 0.36 & 0.520409623011161 & 0 & & 0.70 & 0.497051625384310 & 3 \\
0.03 & 0.915382570289968 & 0 & & 0.37 & 0.517678406448673 & 0 & & 0.71 & 0.494952099960270 & 3 \\
0.04 & 0.889546123947318 & 0 & & 0.38 & 0.515203115898888 & 0 & & 0.72 & 0.492503325626898 & 3 \\
0.05 & 0.864871357070144 & 0 & & 0.39 & 0.512966614693382 & 0 & & 0.73 & 0.489671882997926 & 3 \\
0.06 & 0.841339106925162 & 0 & & 0.40 & 0.510952590266876 & 0 & & 0.74 & 0.486421112010412 & 3 \\
0.07 & 0.818927278955552 & 0 & & 0.41 & 0.509145530875737 & 0 & & 0.75 & 0.482710798816568 & 3 \\
0.08 & 0.797611260644012 & 0 & & 0.42 & 0.507530701231860 & 0 & & 0.76 & 0.485668294927295 & 0 \\
0.09 & 0.777364303356898 & 0 & & 0.43 & 0.506094117314358 & 0 & & 0.77 & 0.487531502509508 & 0 \\
0.10 & 0.758157873576055 & 0 & & 0.44 & 0.504822520596365 & 0 & & 0.78 & 0.488761800555490 & 0 \\
0.11 & 0.739961974954502 & 0 & & 0.45 & 0.503703351900788 & 0 & & 0.79 & 0.489647199317731 & 0 \\
0.12 & 0.722745442648168 & 0 & & 0.46 & 0.502724725076914 & 0 & & 0.80 & 0.492023516195469 & 4 \\
0.13 & 0.706476211379879 & 0 & & 0.47 & 0.501875400669447 & 0 & & 0.81 & 0.503153251532731 & 4 \\
0.14 & 0.691121558685305 & 0 & & 0.48 & 0.501144759732606 & 0 & & 0.82 & 0.515649141270697 & 4 \\
0.15 & 0.676648324774991 & 0 & & 0.49 & 0.500522777924400 & 0 & & 0.83 & 0.529558199754849 & 4 \\
0.16 & 0.663023110423200 & 0 & & 0.50 & 0.500000000000000 & 0 - 6 & & 0.84 & 0.544922806676699 & 4 \\
0.17 & 0.650212454264204 & 0 & & 0.51 & 0.500501405668828 & 3 & & 0.85 & 0.561780267849546 & 4 \\
0.18 & 0.638182990841022 & 0 & & 0.52 & 0.501039303939003 & 3 & & 0.86 & 0.580162406542849 & 4 \\
0.19 & 0.626901590711292 & 0 & & 0.53 & 0.501593642128842 & 3 & & 0.87 & 0.600095190006182 & 4 \\
0.20 & 0.616335483870968 & 0 & & 0.54 & 0.502145436254512 & 3 & & 0.88 & 0.621598395388394 & 4 \\
0.21 & 0.606452367709536 & 0 & & 0.55 & 0.502676666691200 & 3 & & 0.89 & 0.644685318766780 & 4 \\
0.22 & 0.597220500661255 & 0 & & 0.56 & 0.503170154973435 & 3 & & 0.90 & 0.669362530454037 & 4 \\
0.23 & 0.588608782666111 & 0 & & 0.57 & 0.503609423392782 & 3 & & 0.91 & 0.695629679157600 & 4 \\
0.24 & 0.580586823502337 & 0 & & 0.58 & 0.503978539037176 & 3 & & 0.92 & 0.723479346938030 & 4 \\
0.25 & 0.573125000000000 & 0 & & 0.59 & 0.504261943862990 & 3 & & 0.93 & 0.752896956262018 & 4 \\
0.26 & 0.566194503092659 & 0 & & 0.60 & 0.504444272301620 & 3 & & 0.94 & 0.783860729783535 & 4 \\
0.27 & 0.559767375611960 & 0 & & 0.61 & 0.504510157780690 & 3 & & 0.95 & 0.816341702825695 & 4 \\
0.28 & 0.553816541678445 & 0 & & 0.62 & 0.504444029389835 & 3 & & 0.96 & 0.850303787887821 & 4 \\
0.29 & 0.548315828491198 & 0 & & 0.63 & 0.504229899746876 & 3 & & 0.97 & 0.885703889878334 & 4 \\
0.30 & 0.543239981269422 & 0 & & 0.64 & 0.503851144926449 & 3 & & 0.98 & 0.922492070184656 & 4 \\
0.31 & 0.538564672050849 & 0 & & 0.65 & 0.503290277104315 & 3 & & 0.99 & 0.960611757145824 & 4 \\
0.32 & 0.534266503005141 & 0 & & 0.66 & 0.502528710351019 & 3 & &  & &  \\
0.33 & 0.530323004875406 & 0 & & 0.67 & 0.501546519782557 & 3 & &  &  &   \\\cline{1-3} \cline{5-7} \cline{9-11}
\end{tabular}
\end{center}

\begin{center}
\begin{tabular}{|c|c|c|c|c|c|c|c|c|c|c|}
\multicolumn{11}{c}{1-pile \textit{Nim} of 8 chips} \\[2ex]
\cline{1-3} \cline{5-7} \cline{9-11}
$p$ & $N_8(p)$ & opt. && $p$ & $N_8(p)$ & opt. && $p$ & $N_8(p)$ & opt.\\\cline{1-3} \cline{5-7} \cline{9-11}
0.00 & 1.000000000000000 & 0 & & 0.34 & 0.524612804610651 & 0 & & 0.68 & 0.545970522741199 & 7 \\
0.01 & 0.970598833846604 & 0 & & 0.35 & 0.521406933601757 & 0 & & 0.69 & 0.551487714535159 & 7 \\
0.02 & 0.942389401420633 & 0 & & 0.36 & 0.518505160268744 & 0 & & 0.70 & 0.557369402493409 & 7 \\
0.03 & 0.915360139631172 & 0 & & 0.37 & 0.515887477804983 & 0 & & 0.71 & 0.563623154253780 & 7 \\
0.04 & 0.889496262043453 & 0 & & 0.38 & 0.513534665147680 & 0 & & 0.72 & 0.570257107763081 & 7 \\
0.05 & 0.864780139230876 & 0 & & 0.39 & 0.511428281459389 & 0 & & 0.73 & 0.577280136997554 & 7 \\
0.06 & 0.841191648430970 & 0 & & 0.40 & 0.509550658712716 & 0 & & 0.74 & 0.584702023073867 & 7 \\
0.07 & 0.818708494842189 & 0 & & 0.41 & 0.507884892608764 & 0 & & 0.75 & 0.592533630732249 & 7 \\
0.08 & 0.797306506713112 & 0 & & 0.42 & 0.506414832046128 & 0 & & 0.76 & 0.600602594822888 & 7 \\
0.09 & 0.776959906202603 & 0 & & 0.43 & 0.505125067343732 & 0 & & 0.77 & 0.608278367758439 & 7 \\
0.10 & 0.757641557828904 & 0 & & 0.44 & 0.504000917407404 & 0 & & 0.78 & 0.616362314527341 & 7 \\
0.11 & 0.739323196177174 & 0 & & 0.45 & 0.503028416016869 & 0 & & 0.79 & 0.624949521874582 & 7 \\
0.12 & 0.721975634398376 & 0 & & 0.46 & 0.502194297396741 & 0 & & 0.80 & 0.633885460850028 & 7 \\
0.13 & 0.705568954907076 & 0 & & 0.47 & 0.501485981222039 & 0 & & 0.81 & 0.643390677082934 & 7 \\
0.14 & 0.690072683571240 & 0 & & 0.48 & 0.500891557195910 & 0 & & 0.82 & 0.653576092787472 & 7 \\
0.15 & 0.675455948582760 & 0 & & 0.49 & 0.500399769324345 & 0 & & 0.83 & 0.664483974784802 & 7 \\
0.16 & 0.661687625102613 & 0 & & 0.50 & 0.500000000000000 & 0 - 7 & & 0.84 & 0.676156671381491 & 7 \\
0.17 & 0.648736466688604 & 0 & & 0.51 & 0.500436904581148 & 7 & & 0.85 & 0.688636469765496 & 7 \\
0.18 & 0.636571224435853 & 0 & & 0.52 & 0.501046472699034 & 7 & & 0.86 & 0.701965462606918 & 7 \\
0.19 & 0.625160754689915 & 0 & & 0.53 & 0.501847386028536 & 7 & & 0.87 & 0.716185424862204 & 7 \\
0.20 & 0.614474116129032 & 0 & & 0.54 & 0.502856988669459 & 7 & & 0.88 & 0.731337701547140 & 7 \\
0.21 & 0.604480656954820 & 0 & & 0.55 & 0.504091334869875 & 7 & & 0.89 & 0.747463107001658 & 7 \\
0.22 & 0.595150092879173 & 0 & & 0.56 & 0.505565243543001 & 7 & & 0.90 & 0.764601835923839 & 7 \\
0.23 & 0.586452576548713 & 0 & & 0.57 & 0.507292359676206 & 7 & & 0.91 & 0.782793386206581 & 7 \\
0.24 & 0.578358759006144 & 0 & & 0.58 & 0.509285222725594 & 7 & & 0.92 & 0.802076493373566 & 7 \\
0.25 & 0.570839843750000 & 0 & & 0.59 & 0.511555342072545 & 7 & & 0.93 & 0.822489076185915 & 7 \\
0.26 & 0.563867633919946 & 0 & & 0.60 & 0.514113279592087 & 7 & & 0.94 & 0.844068192781701 & 7 \\
0.27 & 0.557414573103620 & 0 & & 0.61 & 0.516968739349510 & 7 & & 0.95 & 0.866850006520864 & 7 \\
0.28 & 0.551453780232595 & 0 & & 0.62 & 0.520130664403962 & 7 & & 0.96 & 0.890869760541145 & 7 \\
0.29 & 0.545959079009054 & 0 & & 0.63 & 0.523607340658660 & 7 & & 0.97 & 0.916161759888739 & 7 \\
0.30 & 0.540905022280874 & 0 & & 0.64 & 0.527406507659348 & 7 & & 0.98 & 0.942759359972066 & 7 \\
0.31 & 0.536266911760715 & 0 & & 0.65 & 0.531535476208216 & 7 & & 0.99 & 0.970694959999281 & 7 \\
0.32 & 0.532020813464196 & 0 & & 0.66 & 0.536001252631791 & 7 & &  &  &  \\
0.33 & 0.528143569222991 & 0 & & 0.67 & 0.540810669520000 & 7 & &  &  &    \\\cline{1-3} \cline{5-7} \cline{9-11}
\end{tabular}

\vspace{0.2in}

\begin{tabular}{|c|c|c|c|c|c|c|c|c|c|c|}
\multicolumn{11}{c}{1-pile \textit{Nim} of 9 chips} \\[2ex]
\cline{1-3} \cline{5-7} \cline{9-11}
$p$ & $N_9(p)$ & opt. && $p$ & $N_9(p)$ & opt. && $p$ & $N_9(p)$ & opt.\\\cline{1-3} \cline{5-7} \cline{9-11}
0.00 & 1.000000000000000 & 0 & & 0.34 & 0.518262761732905 & 0 & & 0.68 & 0.509144456974512 & 7 \\
0.01 & 0.961463689059601 & 0 & & 0.35 & 0.515891437042707 & 0 & & 0.69 & 0.507994483162382 & 7 \\
0.02 & 0.925714854865882 & 0 & & 0.36 & 0.513749079142168 & 0 & & 0.70 & 0.506362575836614 & 7 \\
0.03 & 0.892555115718419 & 0 & & 0.37 & 0.511818737036820 & 0 & & 0.71 & 0.504195793422533 & 7 \\
0.04 & 0.861800459693348 & 0 & & 0.38 & 0.510084557986085 & 0 & & 0.72 & 0.501441845463981 & 7 \\
0.05 & 0.833280167872822 & 0 & & 0.39 & 0.508531707780837 & 0 & & 0.73 & 0.498049464133065 & 7 \\
0.06 & 0.806835823954958 & 0 & & 0.40 & 0.507146297078745 & 0 & & 0.74 & 0.493968713994267 & 7 \\
0.07 & 0.782320402321224 & 0 & & 0.41 & 0.505915313406114 & 0 & & 0.75 & 0.494867111212136 & 8 \\
0.08 & 0.759597427518426 & 0 & & 0.42 & 0.504826558463766 & 0 & & 0.76 & 0.497821177670566 & 8 \\
0.09 & 0.738540198871533 & 0 & & 0.43 & 0.503868590401166 & 0 & & 0.77 & 0.498527897767072 & 8 \\
0.10 & 0.719031074602000 & 0 & & 0.44 & 0.503030670747445 & 0 & & 0.78 & 0.499081310833092 & 8 \\
0.11 & 0.700960810400090 & 0 & & 0.45 & 0.502302715710484 & 0 & & 0.79 & 0.499549378806328 & 8 \\
0.12 & 0.684227947902335 & 0 & & 0.46 & 0.501675251575765 & 0 & & 0.80 & 0.498283360840050 & 8 \\
0.13 & 0.668738248967435 & 0 & & 0.47 & 0.501139373955443 & 0 & & 0.81 & 0.492137510167114 & 8 \\
0.14 & 0.654404172034408 & 0 & & 0.48 & 0.500686710655004 & 0 & & 0.82 & 0.493999659131848 & 4 \\
0.15 & 0.641144387193056 & 0 & & 0.49 & 0.500309387940143 & 0 & & 0.83 & 0.503018216706902 & 4 \\
0.16 & 0.628883326904786 & 0 & & 0.50 & 0.500000000000000 & 0 - 8 & & 0.84 & 0.513381495431082 & 4 \\
0.17 & 0.617550769586695 & 0 & & 0.51 & 0.500330897707461 & 7 & & 0.85 & 0.525213413675264 & 4 \\
0.18 & 0.607081453517770 & 0 & & 0.52 & 0.500769807250321 & 7 & & 0.86 & 0.538647686215078 & 4 \\
0.19 & 0.597414718746771 & 0 & & 0.53 & 0.501316264271457 & 7 & & 0.87 & 0.553828497286445 & 4 \\
0.20 & 0.588494174879890 & 0 & & 0.54 & 0.501965547475858 & 7 & & 0.88 & 0.570911216553804 & 4 \\
0.21 & 0.580267392805219 & 0 & & 0.55 & 0.502708589199123 & 7 & & 0.89 & 0.590063164210829 & 4 \\
0.22 & 0.572685618572755 & 0 & & 0.56 & 0.503531924038620 & 7 & & 0.90 & 0.611464432757310 & 4 \\
0.23 & 0.565703507795013 & 0 & & 0.57 & 0.504417680663694 & 7 & & 0.91 & 0.635308774505980 & 4 \\
0.24 & 0.559278879066028 & 0 & & 0.58 & 0.505343621393305 & 7 & & 0.92 & 0.661804565593515 & 4 \\
0.25 & 0.553372485017123 & 0 & & 0.59 & 0.506283233340655 & 7 & & 0.93 & 0.691175859231122 & 4 \\
0.26 & 0.547947799737529 & 0 & & 0.60 & 0.507205873870280 & 7 & & 0.94 & 0.723663543169917 & 4 \\
0.27 & 0.542970821387984 & 0 & & 0.61 & 0.508076971802325 & 7 & & 0.95 & 0.759526618922542 & 4 \\
0.28 & 0.538409888926774 & 0 & & 0.62 & 0.508858284254791 & 7 & & 0.96 & 0.799043623234907 & 4 \\
0.29 & 0.534235511951170 & 0 & & 0.63 & 0.509508207276950 & 7 & & 0.97 & 0.842514215715755 & 4 \\
0.30 & 0.530420212733727 & 0 & & 0.64 & 0.509982136551689 & 7 & & 0.98 & 0.890260960500799 & 4 \\
0.31 & 0.526938379603046 & 0 & & 0.65 & 0.510232872502421 & 7 & & 0.99 & 0.942631334470157 & 4 \\
0.32 & 0.523766130883026 & 0 & & 0.66 & 0.510211062215078 & 7 & &  &  &  \\
0.33 & 0.520881188663901 & 0 & & 0.67 & 0.509865668770588 & 7 & &  &  &  \\\cline{1-3} \cline{5-7} \cline{9-11}
\end{tabular}
\end{center}

\begin{center}
\begin{tabular}{|c|c|c|c|c|c|c|c|c|c|c|}
\multicolumn{11}{c}{1-pile \textit{Nim} of 10 chips} \\[2ex]
\cline{1-3} \cline{5-7} \cline{9-11}
$p$ & $N_{10}(p)$ & opt. && $p$ & $N_{10}(p)$ & opt. && $p$ & $N_{10}(p)$ & opt.\\\cline{1-3} \cline{5-7} \cline{9-11}
0.00 & 1.000000000000000 & 0 & & 0.34 & 0.515986747898906 & 0 & & 0.68 & 0.506243912340789 & 7 \\
0.01 & 0.961373214942922 & 0 & & 0.35 & 0.513829757735945 & 0 & & 0.69 & 0.505444353101728 & 7 \\
0.02 & 0.925387510371752 & 0 & & 0.36 & 0.511895367145471 & 0 & & 0.70 & 0.504326155763247 & 7 \\
0.03 & 0.891889095669790 & 0 & & 0.37 & 0.510165455691893 & 0 & & 0.71 & 0.502851476132970 & 7 \\
0.04 & 0.860730110105349 & 0 & & 0.38 & 0.508623162214997 & 0 & & 0.72 & 0.500980500118853 & 7 \\
0.05 & 0.831768873555499 & 0 & & 0.39 & 0.507252799339745 & 0 & & 0.73 & 0.498671219672190 & 7 \\
0.06 & 0.804870029884031 & 0 & & 0.40 & 0.506039773660101 & 0 & & 0.74 & 0.495879185289652 & 7 \\
0.07 & 0.779904602727989 & 0 & & 0.41 & 0.504970511299173 & 0 & & 0.75 & 0.495024240460744 & 8 \\
0.08 & 0.756749980325377 & 0 & & 0.42 & 0.504032388554559 & 0 & & 0.76 & 0.497880341589778 & 8 \\
0.09 & 0.735289843374940 & 0 & & 0.43 & 0.503213667345749 & 0 & & 0.77 & 0.498563162671519 & 8 \\
0.10 & 0.715414047681967 & 0 & & 0.44 & 0.502503435189430 & 0 & & 0.78 & 0.499100611755258 & 8 \\
0.11 & 0.697018471449004 & 0 & & 0.45 & 0.501891549438331 & 0 & & 0.79 & 0.499557628502662 & 8 \\
0.12 & 0.680004835464169 & 0 & & 0.46 & 0.501368585529541 & 0 & & 0.80 & 0.498310555123772 & 8 \\
0.13 & 0.664280503078354 & 0 & & 0.47 & 0.500925788998944 & 0 & & 0.81 & 0.493158988111763 & 4 \\
0.14 & 0.649758265708432 & 0 & & 0.48 & 0.500555031029276 & 0 & & 0.82 & 0.497956124025164 & 4 \\
0.15 & 0.636356118625309 & 0 & & 0.49 & 0.500248767310239 & 0 & & 0.83 & 0.500974546772401 & 4 \\
0.16 & 0.623997030956689 & 0 & & 0.50 & 0.500000000000000 & 0 - 9 & & 0.84 & 0.504081230576504 & 4 \\
0.17 & 0.612608713132263 & 0 & & 0.51 & 0.500261812403564 & 7 & & 0.85 & 0.507234676173584 & 4 \\
0.18 & 0.602123384404734 & 0 & & 0.52 & 0.500602372387189 & 7 & & 0.86 & 0.510385910285879 & 4 \\
0.19 & 0.592477542577477 & 0 & & 0.53 & 0.501018622272505 & 7 & & 0.87 & 0.513477621370211 & 4 \\
0.20 & 0.583611737645137 & 0 & & 0.54 & 0.501504398760598 & 7 & & 0.88 & 0.516443217717742 & 4 \\
0.21 & 0.575470350695362 & 0 & & 0.55 & 0.502050584940021 & 7 & & 0.89 & 0.519205802667897 & 4 \\
0.22 & 0.568001379118405 & 0 & & 0.56 & 0.502645282224030 & 7 & & 0.90 & 0.521677060956118 & 4 \\
0.23 & 0.561156228918087 & 0 & & 0.57 & 0.503273996908791 & 7 & & 0.91 & 0.523756049257639 & 4 \\
0.24 & 0.554889514705414 & 0 & & 0.58 & 0.503919835298864 & 7 & & 0.92 & 0.525327882780600 & 4 \\
0.25 & 0.549158867778929 & 0 & & 0.59 & 0.504563700779711 & 7 & & 0.93 & 0.526262308259416 & 4 \\
0.26 & 0.543924752548549 & 0 & & 0.60 & 0.505184485875500 & 7 & & 0.94 & 0.526412151855156 & 4 \\
0.27 & 0.539150291437597 & 0 & & 0.61 & 0.505759252250835 & 7 & & 0.95 & 0.525611628227441 & 4 \\
0.28 & 0.534801098297382 & 0 & & 0.62 & 0.506263391820508 & 7 & & 0.96 & 0.523674494335699 & 4 \\
0.29 & 0.530845120286660 & 0 & & 0.63 & 0.506670762628907 & 7 & & 0.97 & 0.520392028276994 & 4 \\
0.30 & 0.527252488101950 & 0 & & 0.64 & 0.506953793938036 & 7 & & 0.98 & 0.515530809576837 & 4 \\
0.31 & 0.523995374391582 & 0 & & 0.65 & 0.507083555988218 & 7 & & 0.99 & 0.508830272700490 & 4 \\
0.32 & 0.521047860144447 & 0 & & 0.66 & 0.507029791117002 & 7 & &  &  &  \\
0.33 & 0.518385808812154 & 0 & & 0.67 & 0.506760904270997 & 7 & &  &  &     \\\cline{1-3} \cline{5-7} \cline{9-11}
\end{tabular}
\end{center}

\vspace{0.25in}

\end{document}